 \renewcommand{\epsilon}{\varepsilon}
 \newtheorem{theorem}{Theorem}[section]
 \newtheorem{lemma}[theorem]{Lemma}
 \newtheorem{Corollary}[theorem]{Corollary}
 \newtheorem{proposition}[theorem]{Proposition}
 \newtheorem{deff}[theorem]{Definition}
 \newtheorem{rem}[theorem]{Remark}
 \newcommand{\bth}{\begin{theorem}}
 \newcommand{\ble}{\begin{lemma}}
 \newcommand{\bcor}{\begin{corr}}
 \newcommand{\bdeff}{\begin{deff}}
 \newcommand{\bprop}{\begin{proposition}}
 \newcommand{\ele}{\end{lemma}}
 \newcommand{\ecor}{\end{corr}}
 \newcommand{\edeff}{\end{deff}}
 \newcommand{\eprop}{\end{proposition}}
 \renewcommand{\Pi}{\varPi}
 \renewcommand{\epsilon}{\varepsilon}
\numberwithin{equation}{section}
\title
[On the entire self-shrinking solutions]{On the entire
self-shrinking solutions
 to Lagrangian Mean Curvature Flow}
\author{Rongli Huang}
\author{Zhizhang Wang}
\address{School of Mathematical Science, Fudan University, Shanghai 200433, People's Republic of China, E-mail: huangronglijane@yahoo.cn}
\address{School of Mathematical Science, Fudan University, Shanghai 200433, People's Republic of China, E-mail: youxiang163wang@163.com }
\date{}
\begin{document}
\maketitle
%\section{}
%\subsection{}

\begin{abstract}
The authors prove that the logarithmic Monge-Amp\`{e}re  flow with uniformly
bound and convex initial data satisfies uniform decay estimates away
from time $t=0$. Then applying the decay estimates, we conclude  that
every entire classical strictly convex solution of
 the equation
 \begin{equation*}
\det D^{2}u=\exp\{n(-u+\frac{1}{2}\sum_{i=1}^{n}x_{i}\frac{\partial u}{\partial x_{i}})\},
\end{equation*}
 should be a quadratic polynomial if the inferior limit of the smallest eigenvalue
 of the function $|x|^{2}D^{2}u$ at infinity has an uniform positive lower bound  larger than $2(1-1/n)$.
 Using a similar method, we can prove that every classical convex or concave solution
 of the equation
 \begin{equation*}
\sum_{i=1}^{n}\arctan\lambda_{i}=-u+\frac{1}{2}\sum_{i=1}^{n}x_{i}\frac{\partial u}{\partial x_{i}}.
\end{equation*}
must be a quadratic polynomial, where $\lambda_{i}$ are the eigenvalues
of  the Hessian $D^{2}u$.

 \end{abstract}

\noindent{\bf MSC 2000:} Primary 53C44; Secondary 53A10.

\noindent{\bf Keywords:} self-shrinking
solutions; logarithmic Monge-Amp\`{e}re  flow; Lewy rotation

\section{Introduction}
In 1915, S. Bernstein  \cite{Nit} proved his celebrated theorem that
the only entire minimal graphs in 3 dimensional Euclidean space are
planes. In 1954, K. J\"{o}rgens  \cite{J} proved that every
classical strictly convex solution of the equation
\begin{equation}
\det D^{2}u=1,\,\,\, x\in \mathbb{R}^{2}
\end{equation}
must be a quadratic polynomial, and  Bernstein's theorem can be
proved  using this result. Meanwhile, E. Calabi ($n\leq5$) \cite{C}
and A.V. Pogorelov ($n\geq2$) \cite{P} extended K. J\"{o}rgens'
theorem to $\mathbb{R}^n$. Later J. Jost and Y.L. Xin had an
alternative proof for this result \cite{JX}. In 2003, L. Cafarelli
and Y.Y. Li \cite{LL} gave an extension  to the theorem of K.
J\"{o}rgens, E. Calabi and A.V. Pogorelov. In that paper, they presented
another proof of the theorem of K. J\"{o}rgens, E. Calabi and A.V.
Pogorelov and did research on the asymptotic behavior of  convex
solutions. They also used their results to reprove the Bernstein
theorems. Recently, A.M. Li and R.W. Xu \cite{LX} showed that every
smooth strictly convex solution on $\mathbb{R}^{n}$ of the
Monge-Amp\`{e}re  type equation
\begin{equation}\label{e1.2}
\det D^{2}u=\exp\{-\sum_{i=1}^{n}d_{i}\frac{\partial u}{\partial x_{i}}-d_{0}\},\,\,\, x\in \mathbb{R}^{n}
\end{equation}
must be a quadratic polynomial where $d_{0}, d_{1},\cdots,d_{n}$ are
constants.

From \cite{HL}, we know that the gradient graph $(x, \nabla u)$
determines a volume minimizing surface in $\mathbb{C}^{n}$ if and
only if $u$ satisfies the special Lagrangian equation
\begin{equation*}
 \sum_{i=1}^{n}\arctan\lambda_{i}=\Theta.
\end{equation*}
Here, $\lambda_{i}$ are the eigenvalues of  the Hessian $D^{2}u$ and
$\Theta$ is a constant. It belongs to an important class of fully
nonlinear elliptic equations which has been studied by various
authors (cf. J.G. Bao, J.Y. Chen, B. Guan, M. Jin \cite{BC}, Y.
Yuan \cite{Y}). A Bernstein type theorem has been proved in \cite{Y}.
It tells us, that if $u$ is a smooth convex function and satisfies the special Lagrangian equation
in $\mathbb{R}^{n}$  then $u$ must be a quadratic polynomial.

Lagrangian self-similar solution being part of a minimal cone was investigated in \cite{KK} with additional conditions
on Maslov class and the Lagrangian angle.
In this paper we mainly do research  on a Bernstein type problem  for
self-shrinking equations of the  Lagrangian mean curvature flow in
Euclidean and pseudo-Euclidean space.

Consider the logarithmic Monge-Amp\`{e}re  flow, (cf.\cite{KT})
\begin{equation}\label{e1.6}
\left\{ \begin{aligned}\frac{\partial u}{\partial t}-\frac{1}{n}\ln
\det D^{2}u&=0,
& t>0,\quad x\in \mathbb{R}^{n}, \\
 u&=u_{0}(x), & t=0,\quad x\in \mathbb{R}^{n}.
\end{aligned} \right.
\end{equation}
By Proposition 2.1 in \cite{HB}, there exists a family of diffeomorphisms
$$ r_{t}: \mathbb{R}^{n}\rightarrow \mathbb{R}^{n},$$
such that the maps
\begin{equation*}\begin{aligned}
F(x,t)&=(r_{t}(x),  Du(r_{t}(x),t)) \subset
\mathbb{R}^{2n}_{n},\\
F_{0}(x)&=(x, Du_{0}(x)).
\end{aligned}
\end{equation*}
satisfy the mean curvature flow in pseudo-Euclidean
space:
\begin{equation}\label{e1.7}
\left\{ \begin{aligned}\frac{dF}{dt}&=\overrightarrow{H}, \\
F(x,0)&=F_{0}(x),
\end{aligned} \right. \end{equation}
where $\overrightarrow{H}$ is the mean curvature vector of the
sub-manifold defined by $F$.

\begin{deff}
Assume  that function $u_{0}(x)\in C^{2}(\mathbb{R}^{n})$. We call
$u_{0}(x)$ satisfying Condition A, if
\begin{equation*}
\Lambda I\geq D^{2}u_{0}(x)\geq \lambda I,\qquad x\in
\mathbb{R}^{n}.
\end{equation*}
Here  $ \Lambda, \lambda$ are two positive constants
 and $I$ is the identity matrix.
\end{deff}

For the logarithmic Monge-Amp\`{e}re  flow, the first author  has obtained
the long time existence and the global estimates of
derivatives of solutions (cf. Theorem 1.2 in \cite{HB}).

\begin{proposition}\label{p3.1}
Let $u_{0}:\mathbb{R}^{n}\rightarrow \mathbb{R}$ be a $C^{2}$
function which satisfies  Condition A.
Then  there exists a unique strictly
convex solution  of (\ref{e1.6}) such that
\begin{equation}\label{e3.1}
u(x,t)\in C^{\infty}(\mathbb{R}^{n}\times (0,+\infty))\cap
C(\mathbb{R}^{n}\times [0,+\infty))
\end{equation}
where $u(\cdot,t)$ satisfies Condition A.  More generally, for  $l\in\{3,4,5\cdots\}$ and $\epsilon_{0}>0$,  there holds
\begin{equation}\label{e3.3a}
 \sup_{x\in\mathbb{R}^{n}}|D^{l}u(x,t)|^{2}\leq
 C, \qquad \forall t\in (\epsilon_{0},+\infty),
\end{equation}
where $C$  depends only on $n, \lambda, \Lambda, \dfrac{1}{\epsilon_{0}}.$
\end{proposition}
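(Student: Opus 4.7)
The plan is largely to reduce Proposition \ref{p3.1} to Theorem 1.2 of \cite{HB}, but for completeness I would sketch the architecture of that proof, which rests on three ingredients: short-time existence, propagation of Condition A, and higher regularity via Evans--Krylov/Schauder theory.

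First I would observe that the linearization of the fully nonlinear operator $u \mapsto \frac{1}{n}\ln \det D^{2}u$ is $\frac{1}{n}u^{ij}\partial_{ij}$, where $(u^{ij})$ denotes the inverse of the Hessian. Whenever Condition A holds, this operator is uniformly parabolic with ellipticity constants controlled by $\lambda$ and $\Lambda$. One therefore obtains a short-time smooth strictly convex solution on $\mathbb{R}^{n}\times [0,T_{0})$ by standard quasilinear parabolic theory (after mollifying $u_{0}$ if needed and passing to the limit using the mollified data still satisfying Condition A). Uniqueness on the space of strictly convex solutions bounded in $C^{2}$ follows from the same uniform parabolicity via a comparison argument.

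The decisive step is to show that Condition A is preserved, so the solution never exits the region of uniform parabolicity and may be continued to all $t > 0$. Differentiating $n u_{t} = \ln\det D^{2}u$ twice in a fixed direction $\xi$ gives
\begin{equation*}
n\partial_{t}u_{\xi\xi} = u^{ab}(u_{\xi\xi})_{ab} - u^{ap}u^{qb}u_{ab\xi}u_{pq\xi},
\end{equation*}
and the last term is non-positive. Hence the largest eigenvalue $\mu_{\max}(D^{2}u)$ obeys $n\partial_{t}\mu_{\max}\leq u^{ab}\partial_{ab}\mu_{\max}$ in the viscosity sense, and a maximum principle argument yields $\mu_{\max}(x,t)\leq \Lambda$. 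The lower bound $\mu_{\min}(x,t)\geq \lambda$ is obtained symmetrically, either by running the same computation on the smallest eigenvalue with the opposite inequality sign, or by applying the argument to the largest eigenvalue of $(D^{2}u)^{-1}$ using $\det D^{2}u = e^{nu_{t}}$.

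Once Condition A persists, the equation is uniformly parabolic with concave right-hand side in the Hessian variable. Evans--Krylov on parabolic cylinders $B_{r}(x_{0})\times(t-r^{2},t)$ with $t\geq \epsilon_{0}$ then produces uniform interior $C^{2,\alpha}$ estimates, after which successive differentiation of the equation and bootstrapping with parabolic Schauder theory yields the bounds \eqref{e3.3a} for every $l\geq 3$, with constants depending only on $n,\lambda,\Lambda,1/\epsilon_{0}$. The main obstacle throughout is the non-compactness of $\mathbb{R}^{n}$: every maximum-principle and existence step must be adapted to the unbounded setting, typically by truncating to balls $B_{R}$ with suitable boundary data, establishing interior estimates independent of $R$, and passing to $R\to\infty$, which is precisely the work done in \cite{HB}.
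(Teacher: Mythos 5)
The paper does not actually prove Proposition~\ref{p3.1}; it states it as a citation of Theorem~1.2 in \cite{HB}, and your opening reduction to that reference is exactly what the paper does. Your sketch of the likely internal architecture of \cite{HB} (short-time existence from uniform parabolicity, propagation of Condition~A via the maximum principle, then Evans--Krylov and bootstrapping for \eqref{e3.3a}) is reasonable and in the spirit of the comparison principle of \cite{Y-M} which the paper invokes in Lemma~\ref{l3.1a} to cope with the unbounded domain.

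One slip to flag in the step on preserving Condition~A: the lower bound $D^{2}u\geq\lambda I$ is \emph{not} obtained ``by running the same computation on the smallest eigenvalue with the opposite inequality sign.'' The concavity of $\ln\det$ gives
\begin{equation*}
n\,\partial_{t}u_{\xi\xi}=u^{ab}(u_{\xi\xi})_{ab}-u^{ap}u^{qb}u_{ab\xi}u_{pq\xi}\leq u^{ab}(u_{\xi\xi})_{ab}
\end{equation*}
for \emph{every} fixed direction $\xi$; there is no opposite-sign version. Taking $\xi$ along the smallest eigenvector still produces a subsolution-type inequality for $\mu_{\min}$, which only yields an upper bound, never a lower one. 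Your second alternative is the correct route: one verifies that the Legendre transform $u^{*}$ satisfies the same flow $\partial_{t}u^{*}=\tfrac{1}{n}\ln\det D^{2}u^{*}$ (since $\partial_{t}u^{*}=-\partial_{t}u$ at fixed gradient variable and $\det D^{2}u^{*}=(\det D^{2}u)^{-1}$), applies the same maximum-principle argument to get $D^{2}u^{*}\leq\lambda^{-1}I$, and then reads off $D^{2}u\geq\lambda I$. With that correction, the sketch is coherent and matches the role the Proposition plays in the paper.
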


In fact, in this paper we will prove the following  stronger result:
\begin{theorem}\label{t1.2a}
Assume that   $u(x,t)$ is a strictly convex  solution of
(\ref{e1.6}), and $u(\cdot,t)$ satisfies  Condition A.
Then there exists a positive constant $C$  depending only on $n,
\lambda, \Lambda, \dfrac{1}{\epsilon_{0}}$, such that
\begin{equation}\label{e1.6a}
\sup_{x\in\mathbb{R}^{n}}|D^{3}u(x,t)|^{2}\leq
\frac{C}{t}, \qquad \forall t\geq \epsilon_{0}.
\end{equation}
More generally, for all $l\in\{3,4,5\cdots\}$ there holds
\begin{equation}\label{e1.7a}
\sup_{x\in\mathbb{R}^{n}}|D^{l}u(x,t)|^{2}\leq
\frac{C}{t^{l-2}}, \qquad \forall t\geq\epsilon_{0}.
\end{equation}
\end{theorem}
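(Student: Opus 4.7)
The plan is to prove the decay estimates~(\ref{e1.6a})--(\ref{e1.7a}) by a parabolic rescaling argument that reduces the bound at an arbitrary time $t_0>0$ to the uniform interior bound at the fixed time $s=1$ already supplied by Proposition~\ref{p3.1}. The key observation is that~(\ref{e1.6}) is invariant under the one-parameter family of dilations $(x,t)\mapsto(\mu x,\mu^2 t)$, $u\mapsto\mu^{-2}u$, and this invariance transports the time-independent bound of Proposition~\ref{p3.1} into the required $t^{-(l-2)}$ decay.

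Concretely, for any $t_0>0$ I would set $\mu=t_0^{1/2}$ and define
$$\tilde u(x,s)\;=\;t_0^{-1}\,u\!\left(t_0^{1/2}\,x,\,t_0 s\right),\qquad (x,s)\in\mathbb{R}^n\times[0,\infty).$$
Writing $y=\mu x$ and $\tau=\mu^2 s$, a direct differentiation gives $D_x^2\tilde u(x,s)=D_y^2 u(y,\tau)$ and $\tilde u_s(x,s)=u_t(y,\tau)$, so $\frac{1}{n}\log\det D_x^2\tilde u=\frac{1}{n}\log\det D_y^2u=u_t=\tilde u_s$ and hence $\tilde u$ again solves~(\ref{e1.6}). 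Because $D_x^2\tilde u$ coincides with $D_y^2u$ at matching points, $\tilde u(\cdot,s)$ inherits Condition~A with the very same constants $\lambda,\Lambda$; in particular $\tilde u(\cdot,0)=t_0^{-1}u_0(t_0^{1/2}\cdot)$ is a legitimate initial datum for Proposition~\ref{p3.1} with identical $\lambda,\Lambda$. Applying that proposition to $\tilde u$ with (say) $\epsilon_0'=1/2$ then produces a constant $C_l=C_l(n,\lambda,\Lambda)$, independent of $t_0$, such that $\sup_{x\in\mathbb{R}^n}|D_x^l\tilde u(x,1)|^2\le C_l$ for each $l\ge 3$. Since a chain-rule computation gives $D_x^l\tilde u(x,s)=t_0^{(l-2)/2}D_y^l u(y,\tau)$, evaluating at $s=1$ and taking the supremum over $y=t_0^{1/2}x\in\mathbb{R}^n$ yields $\sup_{y}|D^l u(y,t_0)|^2\le C_l/t_0^{\,l-2}$, which is precisely~(\ref{e1.7a}) at time $t_0$; the case $l=3$ is~(\ref{e1.6a}).

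The only substantive point to verify is the correct choice of dilation. A naive parabolic rescaling $u(\mu x,\mu^2 s)$ does \emph{not} preserve~(\ref{e1.6}): the $\frac{1}{n}\log\det$ nonlinearity is not positively homogeneous and produces a stray additive $\log\mu$ that cannot be absorbed. The prefactor $\mu^{-2}$ in front of $u$ is chosen precisely so that $D_x^2\tilde u$ equals $D_y^2u$ (rather than a positive multiple of it), cancelling this obstruction. Once the invariance is in hand, the remaining ingredients are routine: Condition~A is preserved for free, and one uses only that the constant in Proposition~\ref{p3.1} depends on $n,\lambda,\Lambda$ alone once $\epsilon_0$ is fixed, which is the content of that proposition as recorded from~\cite{HB}. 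A pleasant by-product is that the constant $C$ in~(\ref{e1.7a}) so obtained is in fact independent of $\epsilon_0$, so the stated $1/\epsilon_0$-dependence could be dropped; conversely, the same argument shows that any attempt at a direct maximum-principle proof on a finite window $[\epsilon_0,T]$ with auxiliary function $t|D^3u|^2+A|D^2u|^2$ fails to give a $T$-independent bound, which is precisely what the scaling trick circumvents.
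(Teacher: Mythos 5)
Your proof is correct, and it takes a genuinely different route from the paper's.

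The paper proves (\ref{e1.6a}) via a maximum-principle argument: Lemma~\ref{l3.2a} is a Calabi-type computation showing that $\sigma=u^{kl}u^{pq}u^{rs}u_{kpr}u_{lqs}$ satisfies the Riccati-type parabolic inequality (\ref{no1}); Corollary~\ref{c3.3} then pairs this with the comparison principle of Lemma~\ref{l3.1a} to obtain $\sup_x\sigma(\cdot,t)\le\sup_x\sigma_0/\bigl(1+\tfrac{1}{2n^2}\sup_x\sigma_0\,t\bigr)$; finally Proposition~\ref{p3.1} supplies the needed bound on $\sigma$ at $t=\epsilon_0$, and one restarts the comparison there. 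For $l\ge4$ the paper switches to a blow-up/contradiction argument: assuming $|D^4u|^2t^2$ is unbounded, it extracts a sequence of centred parabolic rescalings at points where the quantity nearly maximizes, uses the already-proved $l=3$ decay to force $D^3$ of the limit to vanish identically, and derives a contradiction with the normalized $D^4$ at the base point. Your approach short-circuits all of this: you observe that the substitution $\tilde u(x,s)=t_0^{-1}u(t_0^{1/2}x,t_0s)$ is an exact symmetry of (\ref{e1.6}), that $D^2_x\tilde u=D^2_y u$ so Condition~A is preserved with identical constants, and that therefore Proposition~\ref{p3.1} applied to $\tilde u$ at $s=1$ (with a fixed $\epsilon_0'=1/2$) yields a constant independent of $t_0$, which unscales precisely to the claimed decay for every $l$ at once. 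Your verification that the $\mu^{-2}$ prefactor is forced by the non-homogeneity of $\log\det$ is the right observation, and the chain-rule scaling $D^l_x\tilde u=t_0^{(l-2)/2}D^l_y u$ is exactly what turns the static bound into $t^{-(l-2)}$ decay.

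What each approach buys: your argument is shorter, uniform across all $l$, and --- as you note --- gives a constant that is independent of $\epsilon_0$ (and in fact valid for every $t_0>0$); but it leans entirely on the precise constant-dependence recorded in Proposition~\ref{p3.1}, namely that the bound there depends only on $n,\lambda,\Lambda,1/\epsilon_0$ and on nothing else about the initial datum, so any weakening of that statement in~\cite{HB} would collapse the argument. The paper's route is more laborious but more self-contained: once a bound at $t=\epsilon_0$ is in hand, the Riccati comparison produces the decay intrinsically, and the blow-up step produces the higher-order decay using only the $l=3$ case and interior Schauder theory, with no reliance on any quantitative content of Proposition~\ref{p3.1} beyond the single-time bound (\ref{e3.7}). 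One small correction to your closing remark: a direct maximum-principle proof does \emph{not} fail here --- Lemma~\ref{l3.2a} together with Corollary~\ref{c3.3} is exactly such a proof, with $\sigma$ as the auxiliary quantity, and it yields a $T$-independent bound because the Riccati nonlinearity $\sigma^2$ produces decay rather than merely boundedness; what fails is the particular linear auxiliary function $t|D^3u|^2+A|D^2u|^2$ you mention, not the method.
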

\begin{rem}
For the special Lagrangian evolution equation (\ref{e1.6b}),
there are similar results in the paper \cite{ACH1}.

\end{rem}

Next we consider the following  Monge-Amp\`{e}re type equation
\begin{equation}\label{e1.3} \det
D^{2}u=\exp\{n(-u+\frac{1}{2}\sum_{i=1}^{n}x_{i}\frac{\partial
u}{\partial x_{i}})\}.
\end{equation}
According to definitions in \cite{CM}, we can show that an entire
solution to (\ref{e1.3})  is  a self-shrinking solution to
Lagrangian mean curvature flow in Pseudo-Euclidean space.  As an
application of Proposition \ref{p3.1} and Theorem \ref{t1.2a}, we
can prove that
\begin{theorem}\label{t1.1}
Assume that $u :\mathbb{R}^{n}\rightarrow \mathbb{R}$ is a $C^{2}$ strictly convex
solution of (\ref{e1.3}) which satisfies
\begin{equation}\label{e1.511}
\liminf_{x\rightarrow\infty}|x|^{2}\mu(x)>\frac{2(n-1)}{n},
\end{equation}
where $\mu(x)$ is the smallest eigenvalue
 of $D^{2}u$.  Then $u$ must be a quadratic polynomial. Furthermore, there exists a
symmetric real matrix $A$ such that
\begin{eqnarray}\label{a1.6}
u=u(0)+\frac{1}{2}<x,Ax>,
\end{eqnarray}
where $\det A=e^{-nu(0)}$.
\end{theorem}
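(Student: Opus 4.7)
The plan is to combine a pseudo-Euclidean Lewy rotation with the self-similar structure of the flow~(\ref{e1.6}) and the decay estimates of Theorem~\ref{t1.2a}. The central observation is that, for any self-shrinker $v$ of~(\ref{e1.3}), the ansatz $V(y,t)=(1-t)\,v(y/\sqrt{1-t})$ is an exact classical solution of~(\ref{e1.6}) on $\mathbb{R}^{n}\times[0,1)$; if moreover $v$ satisfies Condition~A, Proposition~\ref{p3.1} provides a unique global solution with the same initial datum, and the ensuing uniqueness together with Theorem~\ref{t1.2a} will force $D^{3}v\equiv 0$.

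\textbf{Step 1 (Reduction via Lewy rotation).} Hypothesis~(\ref{e1.511}) does not bound $D^{2}u$ uniformly from below, so neither Proposition~\ref{p3.1} nor Theorem~\ref{t1.2a} applies to $u$ directly. I would first apply a pseudo-Euclidean Lewy rotation of small parameter $\theta>0$ to the Lagrangian graph $(x,\nabla u(x))\subset\mathbb{R}^{2n}_{n}$, given by
\[
\tilde{x}=\cosh\theta\,x+\sinh\theta\,\nabla u(x),\qquad
\tilde{y}=\sinh\theta\,x+\cosh\theta\,\nabla u(x).
\]
Strict convexity makes $\cosh\theta\,I+\sinh\theta\,D^{2}u$ positive definite, so $x\mapsto\tilde{x}$ is a local diffeomorphism; it is precisely the sharp asymptotic condition~(\ref{e1.511}) that upgrades this to a global diffeomorphism $\mathbb{R}^{n}\to\mathbb{R}^{n}$. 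Defining $\tilde{u}(\tilde{x})$ by $\nabla\tilde{u}(\tilde{x})=\tilde{y}$ with the additive constant chosen so that (\ref{e1.3}) is again satisfied in the $\tilde{x}$ variables, one computes
\[
D^{2}\tilde{u}=(\sinh\theta\,I+\cosh\theta\,D^{2}u)(\cosh\theta\,I+\sinh\theta\,D^{2}u)^{-1},
\]
whose eigenvalues lie in the compact interval $(\tanh\theta,\coth\theta)\subset(0,\infty)$. Hence $\tilde{u}$ solves~(\ref{e1.3}) and satisfies Condition~A with $\lambda=\tanh\theta$, $\Lambda=\coth\theta$.

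\textbf{Step 2 (Self-similar flow and decay forces quadraticity).} Plug $\tilde{u}$ as initial datum into~(\ref{e1.6}); a direct substitution using (\ref{e1.3}) shows that $\tilde{U}(y,t)=(1-t)\,\tilde{u}(y/\sqrt{1-t})$ is a classical strictly convex solution on $\mathbb{R}^{n}\times[0,1)$. By Proposition~\ref{p3.1}, the flow from $\tilde{u}$ admits a unique $C^{\infty}$ solution on $\mathbb{R}^{n}\times[0,\infty)$, so the two agree on $[0,1)$. From the scaling identity $|D^{3}_{y}\tilde{U}(y,t)|^{2}=(1-t)^{-1}|D^{3}\tilde{u}(y/\sqrt{1-t})|^{2}$ and the $l=3$ case of Theorem~\ref{t1.2a},
\[
\sup_{x\in\mathbb{R}^{n}}|D^{3}\tilde{u}(x)|^{2}\le \frac{C(1-t)}{t}\quad\text{for every }t\in[\epsilon_{0},1).
\]
Letting $t\to 1^{-}$ forces $D^{3}\tilde{u}\equiv 0$, so $\tilde{u}$ is a quadratic polynomial. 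Inverting the Lewy rotation, which is affine on the ambient space, shows that the gradient graph of $u$ is affine; hence $u$ is itself a quadratic polynomial. Writing $u=u(0)+\tfrac{1}{2}\langle x,Ax\rangle$ and substituting into~(\ref{e1.3}) then yields the constraint $\det A=e^{-n u(0)}$.

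\textbf{Main obstacle.} The main technical difficulty lies in Step~1: verifying that the pseudo-Euclidean Lewy rotation $x\mapsto\tilde{x}$ is a \emph{global} diffeomorphism of $\mathbb{R}^{n}$, and that the rotated $\tilde{u}$ inherits the self-shrinker equation~(\ref{e1.3}) with the correct additive constant. The hypothesis~(\ref{e1.511}), with its specific threshold $2(n-1)/n$, enters precisely here, controlling the rate at which $\mu(x)$ is allowed to vanish at infinity without destroying properness of the rotated map.
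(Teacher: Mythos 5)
Your Step~2 is correct and coincides, modulo notation, with the paper's Lemma~3.1: a self-shrinker satisfying Condition~A spawns the exact solution $(1-t)u(x/\sqrt{1-t})$ of the flow~(\ref{e1.6}), and the decay (\ref{e1.6a}) then forces $D^{3}u\equiv 0$. The fatal problem is Step~1.

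The transformation $\tilde{x}=\cosh\theta\,x+\sinh\theta\,\nabla u$, $\tilde{y}=\sinh\theta\,x+\cosh\theta\,\nabla u$ is \emph{not} an isometry of the pseudo-Euclidean metric $\sum dx_i\,dy_i$ on $\mathbb{R}^{2n}_{n}$: writing the map in the block form $\left(\begin{smallmatrix}A&B\\C&D\end{smallmatrix}\right)$, the isometry condition for the split form $\left(\begin{smallmatrix}0&I\\I&0\end{smallmatrix}\right)$ includes $A^{T}C+C^{T}A=0$, whereas here $A^{T}C+C^{T}A=2\sinh\theta\cosh\theta\,I\ne0$. Consequently $\tilde{u}$ does \emph{not} solve (\ref{e1.3}); the Lagrangian angle in the pseudo-Euclidean setting is $\ln\det D^{2}u$, and $\ln\det D^{2}\tilde u=\ln\det(\sinh\theta\,I+\cosh\theta\,D^{2}u)-\ln\det(\cosh\theta\,I+\sinh\theta\,D^{2}u)$ is not related to $\ln\det D^{2}u$ by an additive constant. (The transformation you wrote is an isometry of $\sum(dx_i^{2}-dy_i^{2})$, which is the same quadratic form only after an orthogonal change of coordinates, but under that change of coordinates the graph $(x,\nabla u)$ is no longer a gradient graph, so the transported equation is not~(\ref{e1.3}).) The \emph{actual} pseudo-Euclidean ``boost'' in the $(x,y)$ block coordinates is the dilation $(x,y)\mapsto(e^{\theta}x,e^{-\theta}y)$, and the ``rotation by $\pi/2$'' is the Legendre swap $(x,y)\mapsto(y,x)$; these are the only one-parameter families available, and the dilation merely rescales $D^{2}u$ by $e^{-2\theta}$ without compactifying the spectrum. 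So unlike the Euclidean case (\ref{e1.9}), where the genuine $\pi/4$ Lewy rotation maps any convex potential to one with $\|D^{2}\bar u\|\le 1$, there is no pseudo-Euclidean isometry that upgrades strict convexity to Condition~A. A second red flag is your ``main obstacle'' paragraph: the map $x\mapsto\tilde x=\nabla\bigl(\tfrac{\cosh\theta}{2}|x|^{2}+\sinh\theta\,u(x)\bigr)$ is already a global diffeomorphism for \emph{any} $C^{2}$ convex $u$, so hypothesis (\ref{e1.511}) and the specific threshold $2(n-1)/n$ would play no role whatsoever in your proof, which is implausible.

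The paper closes the gap by an entirely different mechanism. Lemma~3.2 is a Pogorelov/Caffarelli--Nirenberg--Spruck style interior $C^{2}$ estimate: one applies the linearized operator $u^{ij}\partial_{i}\partial_{j}$ to a cut-off $\eta_{k}u_{\xi\xi}$; the self-shrinker structure contributes a term $\tfrac{1}{2}\langle x,Du_{11}\rangle$ which the concentration term of the cut-off converts into a coercive $k|x|^{2}u_{11}$, while the bad term $-2k\sum_{i\ge2}u^{ii}u_{11}$ is dominated \emph{precisely because} (\ref{e1.511}) gives $u_{ii}\ge\lambda/|x|^{2}$ with $\lambda>2(n-1)/n$. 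This yields $D^{2}u\le CI$. The Legendre transform (which \emph{is} a pseudo-Euclidean isometry and preserves (\ref{e1.3}) exactly, with $D^{2}u^{*}=(D^{2}u)^{-1}$) then turns the upper bound into the complementary lower bound: $D^{2}u^{*}\ge C^{-1}I$ makes (\ref{e1.511}) trivial for $u^{*}$, so Lemma~3.2 applies again and gives $D^{2}u^{*}\le CI$, i.e.\ $D^{2}u\ge C^{-1}I$. Only after that does one invoke the flow-plus-decay argument (Lemma~3.1, your Step~2). What is missing in your write-up, in short, is the Pogorelov-type $C^{2}$ estimate; the Lewy rotation cannot substitute for it in signature~$(n,n)$.
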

\begin{rem}
In dimension $1$, assume that $u$ is a smooth solution of (\ref{e1.3})
with $u'(0)=0$, then $$u=u(0)+\frac{1}{2}e^{-nu(0)}x^{2}.$$ This follows from
 the existence and the uniqueness results of ordinary
differential equations.
\end{rem}
By Theorem \ref{t1.1} and by  (\ref{a1.6}) we know that condition (\ref{e1.511}) implies
\begin{eqnarray}\label{e1.12a}
\nabla u(0)=0.
\end{eqnarray}
Then, a natural question is presented. If we weaken condition
(\ref{e1.511}) to (\ref{e1.12a}), does  the same result in Theorem \ref{t1.1} still hold?

In \cite{KM}, the special Lagrangian evolution equation can be
written as \begin{equation}\label{e1.6b} \left\{
\begin{aligned}\frac{\partial u}{\partial t}-
\frac{1}{\sqrt{-1}}\ln\frac{\det(I+\sqrt{-1}D^{2}u)}{\sqrt{\det(I+(D^{2}u)^{2})}}&=0,
& t>0,\quad x\in \mathbb{R}^{n}, \\
 u&=u_{0}(x), & t=0,\quad x\in \mathbb{R}^{n}.
\end{aligned} \right.
\end{equation}
It is well-known that there exists a family of diffeomorphisms
$$ r_{t}: \mathbb{R}^{n}\rightarrow \mathbb{R}^{n},$$
such that
\begin{equation*}\begin{aligned}
F(x,t)&=(r_{t}(x),  Du(r_{t}(x),t)) \subset
\mathbb{R}^{2n},\\
F_{0}(x)&=(x, Du_{0}(x))
\end{aligned}
\end{equation*}
 satisfies the mean curvature flow in
Euclidean space:
\begin{equation}\label{e1.7b}
\left\{ \begin{aligned}\frac{dF}{dt}&=\overrightarrow{H}, \\
F(x,0)&=F_{0}(x), \end{aligned} \right.
\end{equation}
where $\overrightarrow{H}$ is the mean curvature vector of the
sub-manifold defined by $F$.

Consider the entire self-shrinking solutions to Lagrangian mean
curvature flow in  Euclidean space. When the Hessian of the
potential function  $u$ has eigenvalues strictly uniformly between
-1 and 1, A. Chau, J.Y. Chen and W.Y. He   showed that  all
self-shrinking solutions must be quadratic polynomials. The next two
theorems generalize their results \cite{ACH2}.
\begin{theorem}\label{t1.6}
Let $u$ be a $C^{2}$  self-shrinking solution to Lagrangian mean
curvature flow in  Euclidean space:
\begin{equation}\label{e1.9}
\sum_{i=1}^{n}\arctan\lambda_{i}=-u+\frac{1}{2}\sum_{i=1}^{n}x_{i}\frac{\partial
u}{\partial x_{i}},
\end{equation}
where $\lambda_{i}$ $(i=1,2,\cdots,n)$ are the eigenvalues of  Hessian $D^{2}u$.
Suppose that
\begin{equation}\label{e1.10}
-1\leq\lambda_{i}\leq 1,
\end{equation}
then $u$ must be a quadratic polynomial.
\end{theorem}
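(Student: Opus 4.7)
The plan is to mimic the proof strategy that would prove Theorem \ref{t1.1}, exploiting the self-similar structure of a self-shrinker and applying the derivative decay estimate of Theorem \ref{t1.2a} in its Lagrangian form. Given a $C^2$ self-shrinker $u$ of (\ref{e1.9}) satisfying $-1\le\lambda_i\le 1$, I would introduce the one-parameter family
$$u(x,t):=(1-t)\,u\!\left(\frac{x}{\sqrt{1-t}}\right),\qquad t\in[0,1).$$
Setting $y=x/\sqrt{1-t}$, a direct calculation gives $\partial_t u(x,t)=-u(y)+\tfrac12\,y\cdot\nabla u(y)$ and $D^2u(x,t)=D^2u(y)$. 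Thus the static self-shrinker equation (\ref{e1.9}) is exactly the statement that $u(x,t)$ solves the special Lagrangian evolution equation (\ref{e1.6b}) with initial datum $u$ at $t=0$. Since $D^2u(\cdot,t)$ shares its spectrum with $D^2u$ at the rescaled point, the Hessian bound $-1\le\lambda_i\le 1$ persists for every $t\in[0,1)$.

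Next I would invoke the special Lagrangian analogue of Theorem \ref{t1.2a}, pointed out in the Remark following that statement and available in \cite{ACH1}: for every $\epsilon_0>0$ and each integer $l\ge 3$,
$$\sup_{x\in\mathbb R^n}|D^l u(x,t)|^2\le\frac{C}{t^{\,l-2}},\qquad t\ge\epsilon_0,$$
with $C$ depending only on $n$, $\epsilon_0$ and the eigenvalue bound. Differentiating the self-similar ansatz yields the scaling identity $D^l u(x,t)=(1-t)^{\,1-l/2}\,D^l u(y)$, which combined with the decay estimate gives
$$\sup_{y\in\mathbb R^n}|D^l u(y)|^2\le\frac{C\,(1-t)^{\,l-2}}{t^{\,l-2}},\qquad t\in[\epsilon_0,1).$$
Letting $t\uparrow 1$ with any fixed $l\ge 3$ forces $D^l u\equiv 0$ on $\mathbb R^n$; hence $u$ is a quadratic polynomial, as claimed.

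The principal obstacle is to justify the cited Lagrangian decay estimate under the \emph{non-strict} hypothesis $-1\le\lambda_i\le 1$: the earlier rigidity theorem of Chau--Chen--He required a strict uniform gap $|\lambda_i|\le 1-\delta$, and the derivative bounds of \cite{ACH1} are naturally phrased under such a gap. To bridge this, I would prepend a Lewy rotation step (consistent with the paper's keywords). Rotating the Lagrangian graph $(x,\nabla u)\subset\mathbb R^{2n}$ by a small angle $\theta\neq 0$ sends $\lambda_i\in[-1,1]$ to $\bar\lambda_i=\tan(\arctan\lambda_i-\theta)$, which is uniformly bounded, and preserves the self-shrinking structure so that the rotated potential $\bar u$ satisfies an equation of the form (\ref{e1.9}) in the new coordinates. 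One then applies the flow/rescaling scheme above to $\bar u$ to conclude it is quadratic, and inverts the rotation. Verifying admissibility of the rotated data for the decay estimate of \cite{ACH1}, and controlling the inverse rotation so as to translate the conclusion back to $u$, is the delicate technical heart of the argument.
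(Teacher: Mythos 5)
Your proposal transplants the parabolic decay-and-rescaling scheme from Lemma \ref{l3.1} to the Euclidean setting, but the paper's proof of Theorem \ref{t1.6} is a purely elliptic maximum-principle argument; indeed the introduction says explicitly ``We only use the elliptic equation (\ref{e1.9}), but don't need the parabolic equation (\ref{e1.6b}).'' Concretely, the paper introduces $g_{ij}=\delta_{ij}+u_{ik}u_{kj}$, tests $\phi=\eta_k\,e^{\alpha\ln\det g}$ (with a slowly-decaying radial cutoff $\eta_k$, the same device as in Lemma \ref{l3.2}) against the linearized operator $g^{ij}\partial_i\partial_j$, and differentiates (\ref{e1.9}) twice to obtain a subsolution-type identity for $\ln\det g$ whose drift term $\tfrac12\langle x,D\ln\det g\rangle$ pairs with $D\eta_k\sim -2kx$ to produce a coercive $k(|x|^2-2n)$ contribution. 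Choosing $R_0>\sqrt{2n}$ and $\alpha<1/(2n^2)$ forces a contradiction at an interior max, then a separate strong-maximum-principle argument in $k\to 0$ shows $\ln\det g$ is constant, and Yuan's algebraic observation from \cite{Y} upgrades this to $u_{abc}\equiv 0$. So your route is genuinely different: the paper's argument avoids entirely the question of which decay estimates for (\ref{e1.6b}) are available under the hypothesis $-1\le\lambda_i\le 1$, which is precisely the weak link in your proposal.

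Beyond the difference of route, the Lewy-rotation patch you propose does not accomplish what you want. Rotating by $\theta$ sends $\arctan\lambda_i\in[-\pi/4,\pi/4]$ to $[-\pi/4-\theta,\,\pi/4-\theta]$, an interval of the \emph{same} length $\pi/2$; the rotated eigenvalues $\bar\lambda_i$ are bounded, but lie in $[\tan(-\pi/4-\theta),\,\tan(\pi/4-\theta)]$, whose left endpoint has magnitude strictly greater than $1$. In other words, the rotation does not deliver the strict two-sided gap $|\bar\lambda_i|\le 1-\delta$ that \cite{ACH2} used; it merely relocates the non-strict boundary. If the decay estimates of \cite{ACH1} genuinely apply to any entire Lipschitz graph (bounded $D^2u_0$), then your original datum already qualifies and the rotation is superfluous; if they do not, the rotation is no help. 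Either way, the ``delicate technical heart'' you flag is a real gap as written: you need to cite or prove the precise form of the third-derivative estimate for (\ref{e1.6b}) under merely $\|D^2u_0\|\le C$, with constants independent of $t$, since that is the input that makes the rescaling $|D^3u(y)|\le C\sqrt{1-t}$ and the limit $t\uparrow 1$ conclusive. The paper sidesteps exactly this by proving Theorem \ref{t1.6} without the flow.
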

\begin{theorem}\label{c1.7}
Let $u$ be a $C^{2}$ convex  or concave solution to (\ref{e1.9}).
Then $u$ must be a quadratic polynomial.
\end{theorem}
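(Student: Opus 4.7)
The strategy is to reduce Theorem \ref{c1.7} to Theorem \ref{t1.6} via a \emph{Lewy rotation}, i.e.\ a unitary rotation of the ambient $\mathbb{C}^n = \mathbb{R}^{2n}$ by angle $-\pi/4$ in each complex factor. Because the Lagrangian graph of a potential transforms under such a rotation into the Lagrangian graph of another potential, and the self-shrinker condition for mean curvature flow is invariant under Euclidean rotations of $\mathbb{R}^{2n}$, the rotated potential $\tilde u$ is again a self-shrinker in the sense of (\ref{e1.9}). The corresponding change of Hessian eigenvalues is the fractional-linear map
$$\tilde\lambda_i \;=\; \tan\!\bigl(\arctan\lambda_i - \tfrac{\pi}{4}\bigr)\;=\;\frac{\lambda_i - 1}{\lambda_i + 1},$$
which sends $\lambda_i \ge 0$ into $[-1,1)$. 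So once $u$ is convex, one Lewy rotation places us within the hypothesis of Theorem \ref{t1.6}.

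First I would reduce the concave case to the convex case: if $u$ is concave and solves (\ref{e1.9}), then $v := -u$ is convex and, using the antisymmetry of $\arctan$, a direct substitution shows that $v$ solves the same equation (\ref{e1.9}). Assume henceforth that $u$ is $C^2$ convex on $\mathbb{R}^n$, and define
$$\tilde x \;=\; \tfrac{1}{\sqrt 2}\bigl(x + Du(x)\bigr), \qquad \tilde y \;=\; \tfrac{1}{\sqrt 2}\bigl(-x + Du(x)\bigr).$$
Because $\phi(x) := \tfrac{1}{2}|x|^2 + u(x)$ is strongly convex with $D^2\phi \ge I$, its gradient $x\mapsto x + Du(x)$ is a global $C^1$-diffeomorphism of $\mathbb{R}^n$ onto itself, so $\tilde x(x)$ is a legitimate change of coordinates. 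Integrating the $1$-form $\tilde y\cdot d\tilde x$ yields a $C^2$ potential $\tilde u(\tilde x)$ with $D\tilde u = \tilde y$; a direct calculation gives
$$\tilde u \;=\; u - \tfrac{1}{4}|x|^2 + \tfrac{1}{4}|Du|^2 - \tfrac{1}{2}\,x\cdot Du \;+\; C,$$
and combining $\tfrac{1}{2}\tilde x\cdot D\tilde u = \tfrac{1}{4}(|Du|^2-|x|^2)$ with $\sum \arctan\tilde\lambda_i = \sum\arctan\lambda_i - n\pi/4$ shows that the choice $C = n\pi/4$ makes $\tilde u$ satisfy the self-shrinker equation (\ref{e1.9}), with eigenvalues $\tilde\lambda_i \in [-1,1]$.

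By Theorem \ref{t1.6}, $\tilde u$ is a quadratic polynomial, so $D^2\tilde u$ is constant. Since the Hessians are related by the smooth map $D^2\tilde u = (D^2 u - I)(D^2 u + I)^{-1}$, invertible wherever $D^2 u + I > 0$, $D^2 u$ is also constant and $u$ is a quadratic polynomial, concluding the proof. The only nontrivial technical points are (i) the global diffeomorphism property of $x\mapsto \tilde x$, settled above via the strong convexity of $\phi$, and (ii) the bookkeeping of the constant $n\pi/4$ that cancels the shift of the Lagrangian angle so that $\tilde u$ satisfies (\ref{e1.9}) without drift; I expect neither of these to present a serious obstacle once the Lewy rotation is set up carefully.
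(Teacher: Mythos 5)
Your proposal is correct and follows essentially the same route as the paper: reduce concave to convex by replacing $u$ with $-u$, then apply the Lewy rotation $\bar{x}=\tfrac{1}{\sqrt 2}(x+Du)$, $D\bar u=\tfrac{1}{\sqrt 2}(-x+Du)$ to land in the hypotheses of Theorem \ref{t1.6} (eigenvalues of the rotated Hessian in $[-1,1]$), and pull back the conclusion via $D^{2}\bar u=(I+D^{2}u)^{-1}(D^{2}u-I)$. The paper's Proposition \ref{p4.1} carries out the same rotation; its verification that $\bar u$ solves (\ref{e1.9}) is done at the level of gradients, showing $\partial_{\bar x}\bar F=\partial_{\bar x}\bar G$ and absorbing the resulting constant, whereas you integrate $D\tilde u$ explicitly and track the additive constant $n\pi/4$; these are equivalent. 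You also make explicit one point the paper leaves tacit: that $x\mapsto x+Du(x)$ is a global diffeomorphism because $\tfrac12|x|^2+u(x)$ has Hessian $\ge I$, which is a worthwhile clarification but not a different method.
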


Here we use some techniques in \cite{Y} and some ideas developed in
the proof of  Lemma \ref{l3.2}. We only use the elliptic equation
(\ref{e1.9}), but  don't need the parabolic equation
(\ref{e1.6b}).

This paper is organized as follows. In section 2, we obtain the
differential inequality (\ref{no1}), which plays an important role
in the third order decay estimates (see Lemma \ref{l3.2a}). Then we
complete the proof of Theorem \ref{t1.2a} by the blow-up argument.
In section 3, we give the proof of Theorem \ref{t1.1} by the second
order derivative estimates for  the equations of Monge-Amp\`{e}re
type (\ref{e1.3}). In section 4,  we  prove Theorem \ref{t1.6}
and \ref{c1.7}.

\section{ the decay estimates of the
 logarithmic Monge-Amp\`{e}re  flow }

 Throughout the following Einstein's convention of
 summation over repeated indices will be adopted.
Denote $$u_{i}=\dfrac{\partial u}{\partial x_{i}},
u_{ij}=\dfrac{\partial^{2}u}{\partial x_{i}\partial x_{j}},
u_{ijk}=\dfrac{\partial^{3}u}{\partial x_{i}\partial x_{j}\partial
x_{k}}, \cdots ,\text{  and  } [u^{ij}]=[ u_{ij}]^{-1}. $$ We
introduce the comparison principle for solutions of Cauchy problems
which belongs to Y. Giga, S. Goto, H. Ishii, M-H. Sato (cf. a
special version of Theorem 4.1 in \cite{Y-M}).
\begin{lemma}\label{l3.1a}
Suppose that the functions $\sigma_{*}, \sigma^{*}\in
C^{2,1}(\mathbb{R}^{n}\times (0,+\infty))\cap C(\mathbb{R}^{n}\times
[0,+\infty))$ and $u$ satisfies Condition A. If there exists a
positive constant $C$, such that
\begin{equation*}
\sigma_{*}\leq C,\quad\sigma^{*}\leq C,
\end{equation*}
and $\sigma_{*}$ , $ \sigma^{*}$ satisfy
\begin{equation*}
\partial_{t}\sigma_{*}-\frac{1}{n}u^{ij}\sigma_{*ij}+\frac{1}{2n^{2}}\sigma_{*}^{2}\leq 0,\quad \forall t>0,\quad x\in \mathbb{R}^{n};
\end{equation*}
\begin{equation*}
\partial_{t}\sigma^{*}-\frac{1}{n}u^{ij}\sigma^{*}_{ij}+\frac{1}{2n^{2}}\sigma^{*2}\geq 0,\quad \forall t>0,\quad x\in \mathbb{R}^{n}; \end{equation*}
\begin{equation*}
\sigma_{*}\leq\sigma^{*}, \quad  t=0,\quad\forall x\in \mathbb{R}^{n}.
\end{equation*}
Then there holds
\begin{equation*}
\sigma_{*}\leq\sigma^{*}, \quad \forall t>0,\quad x\in \mathbb{R}^{n}.
\end{equation*}
\end{lemma}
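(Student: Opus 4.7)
The statement is a standard parabolic comparison principle for the semilinear operator
$\mathcal L \sigma := \partial_t \sigma - \tfrac{1}{n} u^{ij}\sigma_{ij} + \tfrac{1}{2n^2}\sigma^2$.
The plan is to linearise the quadratic nonlinearity by subtraction, exploit the uniform parabolicity afforded by Condition A, and run the parabolic maximum principle on $\mathbb R^n\times [0,T]$ for arbitrary $T>0$, using a quadratic-in-$|x|$ barrier to compensate for the unboundedness of the spatial domain.

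First I would set $W = \sigma_* - \sigma^*$. Writing $\sigma_*^2 - \sigma^{*2} = (\sigma_* + \sigma^*)W$ and subtracting the two hypothesised inequalities gives
\begin{equation*}
\partial_t W - \tfrac{1}{n}u^{ij} W_{ij} + \gamma\, W \leq 0, \qquad \gamma := \tfrac{\sigma_* + \sigma^*}{2n^2},
\end{equation*}
on $\mathbb R^n \times (0,T]$ with $W|_{t=0}\le 0$ and $\gamma \leq C/n^2$ globally. Condition A on $u(\cdot,t)$ yields $\Lambda^{-1} I \leq [u^{ij}] \leq \lambda^{-1} I$, so the operator is uniformly parabolic with $\mathrm{tr}[u^{ij}] \leq n/\lambda$ and bounded measurable coefficients.

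Next, for $\varepsilon > 0$ and constants $K, \mu > 0$ to be chosen, I would introduce the barrier
\begin{equation*}
\phi_\varepsilon(x,t) = \varepsilon\, e^{Kt}(1 + |x|^2), \qquad \tilde W := e^{-\mu t}(W - \phi_\varepsilon).
\end{equation*}
A direct computation, using $u^{ij}(\phi_\varepsilon)_{ij} = 2\varepsilon e^{Kt}\mathrm{tr}[u^{ij}] \leq 2n\varepsilon e^{Kt}/\lambda$, shows that for $K$ large enough (depending on $\lambda,n,C$) the barrier $\phi_\varepsilon$ is a strict supersolution of the linear operator $\partial_t - \tfrac{1}{n}u^{ij}\partial_{ij} + \gamma$; taking then $\mu$ large enough to ensure $\gamma+\mu\ge 0$ on the region where the maximum is searched for, the function $\tilde W$ satisfies
\begin{equation*}
\partial_t\tilde W - \tfrac{1}{n}u^{ij}\tilde W_{ij} + (\gamma + \mu)\tilde W < 0 \quad\text{on}\ \mathbb R^n\times(0,T].
\end{equation*}
Since $\phi_\varepsilon(x,t)\to+\infty$ as $|x|\to\infty$ uniformly for $t\in[0,T]$, the supremum of $\tilde W$ over $\mathbb R^n\times[0,T]$ is attained; the classical parabolic maximum principle then excludes an interior positive maximum (where $\partial_t\tilde W\ge 0$, $\tilde W_{ij}\le 0$ and $(\gamma+\mu)\tilde W\ge 0$ would contradict the strict inequality), so $W\le \phi_\varepsilon$ everywhere. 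Letting $\varepsilon\downarrow 0$ gives $W\le 0$ on $\mathbb R^n\times[0,T]$, and $T>0$ was arbitrary.

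The main obstacle is the unboundedness of the spatial domain: the hypotheses control $\gamma$ only from above and assume no a priori growth on $W$, so one must verify carefully that the quadratic barrier really does dominate $W$ at infinity and that the one-sided control on $\gamma$ suffices after multiplication by $e^{-\mu t}$. This is exactly what the Giga--Goto--Ishii--Sato comparison theorem cited in \cite{Y-M} guarantees in the viscosity-solutions framework; in the present classical $C^{2,1}$ setting the argument above can be carried through directly once Condition A is invoked to make $[u^{ij}]$ and $\mathrm{tr}[u^{ij}]$ uniformly bounded.
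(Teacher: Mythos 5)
The paper offers no proof of this lemma at all: it simply invokes it as ``a special version of Theorem 4.1'' of Giga--Goto--Ishii--Sato. Your plan --- set $W=\sigma_*-\sigma^*$, linearise the quadratic term, introduce the barrier $\varepsilon e^{Kt}(1+|x|^2)$, weight by $e^{-\mu t}$, apply the parabolic maximum principle, and send $\varepsilon\downarrow 0$ --- is the correct classical route, and a self-contained argument along these lines would be a genuine improvement over a bare citation. The linearisation $W_t-\tfrac1n u^{ij}W_{ij}+\gamma W\le 0$ with $\gamma=(\sigma_*+\sigma^*)/(2n^2)$, and the use of Condition A to get $\Lambda^{-1}I\le [u^{ij}]\le\lambda^{-1}I$ and hence a strict supersolution barrier for suitable $K$, are both fine.

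There is, however, a genuine gap, and it is exactly the one you flag at the end and then assert can ``be carried through directly once Condition A is invoked.'' Condition A controls only $[u^{ij}]$; it does nothing about the fact that $\sigma_*,\sigma^*$ are bounded only from above. That one-sided bound breaks your argument in two places. First, $\gamma\le C/n^2$ but $\gamma$ has no lower bound, and the exponential weight $e^{-\mu t}$ is precisely the device for a zeroth-order coefficient bounded below: the phrase ``$\mu$ large enough to ensure $\gamma+\mu\ge 0$ on the region where the maximum is searched for'' is circular, since that region is all of $\mathbb{R}^n\times[0,T]$ and if $\gamma$ is unbounded below there no fixed $\mu$ works. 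Second, the barrier step presumes that $\tilde W$ attains its supremum, which needs $W=\sigma_*-\sigma^*$ to grow more slowly than $|x|^2$; but $-\sigma^*$ has no upper bound, so $W$ has none either, and $\phi_\varepsilon$ need not dominate it at infinity.

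Both defects disappear if one assumes in addition that $\sigma^*$ is bounded below, say $\sigma^*\ge -c$ (in the paper's only application, Corollary~2.4, one even has $\sigma_*=\sigma\ge 0$ and $\sigma^*=\sup\sigma_0/(1+\tfrac1{2n^2}(\sup\sigma_0)t)\ge 0$, so this is harmless). Then $W\le C+c$, so the quadratic barrier dominates and the supremum of $\tilde W$ is attained; and at any point where $W>0$ one has $\sigma_*>\sigma^*\ge -c$, hence $\sigma_*+\sigma^*>-2c$ and $\gamma>-c/n^2$, so a fixed $\mu>c/n^2$ makes $\gamma+\mu>0$ wherever it is needed. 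With this extra hypothesis stated, the rest of your maximum-principle argument is correct. Without it, you should either supply a different argument or do what the paper does and defer entirely to the Giga--Goto--Ishii--Sato theorem, rather than claiming the direct computation closes unconditionally.
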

We are now in a position to describe  Calabi's computation. It is
used by A.V. Pogorelov and L. Caffarelli, L. Nirenbgerg, J. Spruck,
to estimate the third derivatives of Monge-Amp\`{e}re Equation (cf.
\cite{P}, \cite{LLJ1}). Here we use his methods to carry out the
third derivatives of  Monge-Amp\`{e}re Equation of parabolic type.

Let
\begin{equation*}
\sigma=u^{kl}u^{pq}u^{rs}u_{kpr}u_{lqs}.
\end{equation*}
Then the expression measures the square of the third derivatives in terms of
the Riemannian metric $ds^{2}=u_{ij}dx^{i}dx^{i}$. We establish the following lemma which
is a parabolic version of Lemma 3.1  in \cite{LLJ1}.
\begin{lemma}\label{l3.2a}
Let $u$ be a solution of (\ref{e1.6}). If $u(\cdot,t)$ satisfies
(\ref{e3.1}) and Condition A. Then $\sigma$ satisfies a parabolic
inequality:
\begin{equation}\label{no1}
\partial_{t}\sigma-\frac{1}{n}u^{ij}\sigma_{ij}+\frac{1}{2n^{2}}\sigma^{2}\leq 0,\quad \forall t>0,\quad x\in \mathbb{R}^{n}.
\end{equation}
\end{lemma}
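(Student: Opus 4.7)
My plan is to adapt Calabi's classical third-order computation for the Monge-Amp\`ere equation (cf.\ \cite{P}, \cite{LLJ1}) to the parabolic flow (\ref{e1.6}). The natural operator is $L:=\partial_t-\frac{1}{n}u^{ij}\partial_{ij}$, because differentiating the flow $nu_t=\ln\det D^2u$ once in $x_k$ yields the identity $n u_{tk}=u^{ij}u_{ijk}$, a direct parabolic counterpart of the standard elliptic identity. Differentiating once more gives
\begin{equation*}
n u_{tkl}=u^{ij}u_{ijkl}-u^{ia}u^{jb}u_{abl}u_{ijk},
\end{equation*}
which supplies the third-derivative squares needed later. I will also freely use $\partial_t u^{ij}=-u^{ia}u^{jb}u_{abt}$ and $\partial_k u^{ij}=-u^{ia}u^{jb}u_{abk}$ for the inverse Hessian.

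Next I would expand $\partial_t\sigma$ and $u^{ij}\sigma_{ij}$ term by term, using the identities above. Each expansion produces (i) fourth-derivative contributions of the form $u_{ijkl}$ (through $u_{tkpr}$ on the $\partial_t$ side and through $(u_{kpr})_{ij}$ on the spatial side) and (ii) quadratic-in-third-derivative contributions contracted with several copies of $u^{ij}$. The content of Calabi's identity is that the fully symmetric structure of $\sigma=u^{kl}u^{pq}u^{rs}u_{kpr}u_{lqs}$ forces all fourth-derivative terms to cancel between the two sides: the $u^{ij}u_{ijkl}$ piece of $n u_{tkl}$ matches, after commuting partial derivatives, the fourth-order piece coming out of $u^{ij}(u_{kpr})_{ij}$.

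After this cancellation, $L\sigma$ reduces to a sum of degree-six monomials in the third derivatives of $u$ contracted against the inverse metric $g^{ij}=u^{ij}$. The final step is to show this sum is bounded above by $-\frac{1}{2n^{2}}\sigma^{2}$. Interpreting $\sigma$ as the squared $g$-norm of the totally symmetric tensor $T_{kpr}=u_{kpr}$, I would apply the Cauchy--Schwarz inequality $\langle T,S\rangle_g^{2}\le |T|_g^{2}|S|_g^{2}$ to the contractions arising from $(u_t)_{kpr}$, which after using the first differentiated identity couples $T$ against a trace of $T$. Combined with the manifestly nonnegative third-derivative squares left from the spatial piece, this delivers the desired pointwise bound.

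The main obstacle is the bookkeeping of constants. The vanishing of fourth-derivative terms is essentially automatic from Calabi's symmetric choice of $\sigma$, but extracting the sharp coefficient $1/(2n^{2})$, rather than a merely nonnegative expression, requires careful tracking of which monomial carries which power of $n$ and an optimal choice of the pair of tensors fed into the Cauchy--Schwarz step. Once the sharp coefficient is identified, the differential inequality (\ref{no1}) follows.
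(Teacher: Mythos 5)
Your outline correctly identifies the right operator $L=\partial_t-\frac{1}{n}u^{ij}\partial_{ij}$, the differentiated flow identities, and the overall Calabi strategy, and this is indeed the route the paper follows. However, there is a genuine gap in the middle of the argument. You claim that after the higher-order cancellations, $L\sigma$ ``reduces to a sum of degree-six monomials in the third derivatives of $u$.'' That is not what happens: after the fifth-derivative terms cancel between the two expansions, what remains still contains \emph{fourth}-derivative data, specifically the positive square $2u^{kk}u^{pp}u^{rr}u^{ii}u_{kpri}^{2}$ and the sign-indefinite cross term $-6u^{kk}u^{pp}u^{rr}u^{ii}u^{jj}u_{kpr}u_{rij}u_{kpij}$ (in diagonalizing coordinates), alongside the degree-six third-derivative contractions $A$ and $B$. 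You cannot simply discard the manifestly nonnegative squares and apply Cauchy--Schwarz to what is left, because the cross term is not controlled by the third-derivative pieces alone.

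The missing step is precisely Calabi's completion of the square: one must absorb the indefinite cross term into the positive square by rewriting
$2u^{kk}u^{pp}u^{rr}u^{ii}\bigl[u_{kpri}-\tfrac{1}{2}u^{ll}(u_{kli}u_{plr}+u_{pli}u_{klr}+u_{rli}u_{kpl})\bigr]^{2}$
and then computing that the correction produced is $-\frac{3}{2}B-3A$. Only after this does one land in pure third-derivative territory, with a remainder $\frac{3}{2}B-A$, and only then do the elementary tensor inequalities $B\geq A$ and $B\geq\frac{1}{n}\sigma^{2}$ (which are the actual Cauchy--Schwarz steps, applied to the symmetric matrix $S_{kl}=u^{pp}u^{rr}u_{kpr}u_{lpr}$) deliver the bound $u^{ij}\sigma_{ij}-n\partial_t\sigma\geq\frac{1}{2n}\sigma^{2}$ and hence \eqref{no1}. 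Your proposal gestures at Cauchy--Schwarz and ``nonnegative squares from the spatial piece'' but never identifies the completed-square combination, and without that identification the indefinite mixed fourth-derivative term is not controlled. Also, a minor imprecision: what you call the fourth-derivative contributions from $u_{tkpr}$ and $(u_{kpr})_{ij}$ are actually fifth derivatives $u_{kprij}$; the fourth-derivative cancellations come from a different pair of terms (arising from $\partial_t u_{ab}$ contracted into the inverse Hessian factors), and both families must be tracked separately.
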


\begin{proof}

Note that
\begin{equation*}
 \partial
u^{ab}=-u^{ac}\partial u_{cd}u^{db},
\end{equation*}
\begin{equation*}
\partial_{t}\sigma=2u^{kl}u^{pq}u^{rs}\partial_{t}u_{kpr}u_{lqs}-3u^{ka}\partial_{t}
u_{ab}u^{bl}u^{pq}u^{rs}u_{kpr}u_{lqs}.
\end{equation*}
By the equation (\ref{e1.6}), we have
\begin{equation*}
\partial_{t}u_{a}=\frac{1}{n}u^{ij}u_{aij},
\end{equation*}
\begin{equation*}
\partial_{t}u_{ab}=\frac{1}{n}u^{ij}u_{abij}-\frac{1}{n}u^{ic}u^{jd}u_{aij}u_{bcd},
\end{equation*}
\begin{equation*}\aligned
\partial_{t}u_{kpr}=&\frac{1}{n}u^{ij}u_{kprij}-\frac{1}{n}u^{ia}u^{jb}u_{rab}u_{kpij}\\
&-\frac{1}{n}u^{ic}u^{jd}u_{pcd}u_{krij}-\frac{1}{n}u^{ic}u^{jd}u_{kij}u_{prcd}\\
&+\frac{1}{n}u^{ia}u^{cb}u_{rab}u^{jd}u_{kij}u_{pcd}+\frac{1}{n}u^{ic}u^{ja}u^{db}u_{rab}u_{kij}u_{pcd}.
\endaligned
\end{equation*}
Then
\begin{equation}\aligned\label{e3.1a}
n\partial_{t}\sigma =& 2u^{kl}u^{pq}u^{rs}u^{ij}u_{lqs}u_{kprij}-6u^{kl}u^{pq}u^{rs}u^{ia}u^{jb}u_{lqs}u_{rab}u_{kpij}\\
&+4u^{kl}u^{pq}u^{rs}u^{ia}u^{cb}u^{jd}u_{lqs}u_{rab}u_{kij}u_{pcd}                                                   \\
&-3u^{ka}u^{bl}u^{pq}u^{rs}u^{ij}u_{kpr}u_{lqs}u_{abij}+3u^{ka}u^{bl}u^{pq}u^{rs}u^{ic}u^{jd}u_{kpr}u_{lqs}u_{aij}u_{bcd}.
\endaligned
\end{equation}
By the computation in \cite{LLJ1}, we have
\begin{equation}\aligned\label{e3.2}
u^{ij}\sigma_{ij}=& 2u^{kl}u^{pq}u^{rs}u^{ij}u_{lqs}u_{kprij}+2u^{kl}u^{pq}u^{rs}u^{ij}u_{kpri}u_{lqsj}\\
&-12u^{ka}u^{bl}u^{pq}u^{rs}u^{ij}u_{abi}u_{lqs}u_{kprj}\\
&+6u^{ka}u^{bl}u^{pc}u^{dq}u^{rs}u^{ij}u_{kpr}u_{lqs}u_{abi}u_{cdj}\\
&-3u^{ka}u^{bl}u^{pq}u^{rs}u^{ij}u_{kpr}u_{lqs}u_{abij}\\
&+3u^{kc}u^{ad}u^{bl}u^{pq}u^{rs}u^{ij}u_{kpr}u_{lqs}u_{abi}u_{cdj}\\
&+3u^{ka}u^{bc}u^{dl}u^{pq}u^{rs}u^{ij}u_{kpr}u_{lqs}u_{abi}u_{cdj}.
\endaligned
\end{equation}
At any point $x$, we may assume that $u_{ij}$ is diagonal after a
suitable rotation. So the simplified versions of (\ref{e3.1a}), (\ref{e3.2}) are
\begin{equation*}\aligned
n\partial_{t}\sigma =& 2u^{kk}u^{pp}u^{rr}u^{ii}u_{kpr}u_{kprii}-6u^{kk}u^{pp}u^{rr}u^{ii}u^{jj}u_{kpr}u_{rij}u_{kpij}\\
&+4u^{kk}u^{pp}u^{rr}u^{ii}u^{cc}u^{jj}u_{kpr}u_{ric}u_{kij}u_{pcj}                                         \\
&-3u^{kk}u^{bb}u^{pp}u^{rr}u^{ii}u_{kpr}u_{bpr}u_{kbii}+3u^{kk}u^{bb}u^{pp}u^{rr}u^{ii}u^{jj}u_{kpr}u_{bpr}u_{kij}u_{bij},
\endaligned
\end{equation*}
\begin{equation*}\aligned
u^{ij}\sigma_{ij}=& 2u^{kk}u^{pp}u^{rr}u^{ii}u_{kpr}u_{kprii}+2u^{kk}u^{pp}u^{rr}u^{ii}u_{kpri}u_{kpri}\\
&-12u^{kk}u^{bb}u^{pp}u^{rr}u^{ii}u_{kbi}u_{bpr}u_{kpri}
+6u^{kk}u^{bb}u^{pp}u^{dd}u^{rr}u^{ii}u_{kpr}u_{bdr}u_{kbi}u_{pdi}\\
&-3u^{kk}u^{bb}u^{pp}u^{rr}u^{ii}u_{kpr}u_{bpr}u_{kbii}+3u^{kk}u^{aa}u^{bb}u^{pp}u^{rr}u^{ii}u_{kpr}u_{bpr}u_{abi}u_{kai}\\
&+3u^{kk}u^{bb}u^{dd}u^{pp}u^{rr}u^{ii}u_{kpr}u_{dpr}u_{kbi}u_{bdi}.
\endaligned
\end{equation*}
Let
\begin{equation*}
A=u^{kk}u^{pp}u^{rr}u^{ll}u^{qq}u^{ii}u_{kpr}u_{lqr}u_{kli}u_{pqi},\qquad
\end{equation*}
\begin{equation*}
B=u^{kk}u^{pp}u^{rr}u^{ll}u^{qq}u^{ii}u_{kpr}u_{lpr}u_{kqi}u_{lqi}.\qquad
\end{equation*}
Then, we get
\begin{equation*}\aligned
n\partial_{t}\sigma =& 2u^{kk}u^{pp}u^{rr}u^{ii}u_{kpr}u_{kprii}-6u^{kk}u^{pp}u^{rr}u^{ii}u^{jj}u_{kpr}u_{rij}u_{kpij}\\
&-3u^{kk}u^{bb}u^{pp}u^{rr}u^{ii}u_{kpr}u_{bpr}u_{kbii}+4A+3B, \\
\endaligned
\end{equation*}
\begin{equation*}\aligned
u^{ij}\sigma_{ij}=& 2u^{kk}u^{pp}u^{rr}u^{ii}u_{kpr}u_{kprii}+2u^{kk}u^{pp}u^{rr}u^{ii}u_{kpri}u_{kpri}\\
&-12u^{kk}u^{bb}u^{pp}u^{rr}u^{ii}u_{kbi}u_{bpr}u_{kpri}\\
&-3u^{kk}u^{bb}u^{pp}u^{rr}u^{ii}u_{kpr}u_{bpr}u_{kbii}\\
&+6A+3B+3B.
\endaligned
\end{equation*}
It is easy to verify that
\begin{equation*}
u^{kk}u^{bb}u^{pp}u^{rr}u^{ii}u_{kbi}u_{bpr}u_{kpri}=u^{kk}u^{pp}u^{rr}u^{ii}u^{jj}u_{kpr}u_{rij}u_{kpij}.
\end{equation*}
So  we obtain
\begin{equation}\aligned\label{e3.3}
u^{ij}\sigma_{ij}-n\partial_{t}\sigma=&2u^{kk}u^{pp}u^{rr}u^{ii}u_{kpri}u_{kpri}-6u^{kk}u^{pp}u^{rr}u^{ii}u^{jj}u_{kpr}u_{rij}u_{kpij}\\
&+3B+2A.
\endaligned
\end{equation}
Thus
\begin{equation*}\aligned
&2u^{kk}u^{pp}u^{rr}u^{ii}u_{kpri}u_{kpri}-6u^{kk}u^{pp}u^{rr}u^{ii}u^{jj}u_{kpr}u_{rij}u_{kpij}\\
=&2u^{kk}u^{pp}u^{rr}u^{ii}[u_{kpri}-\frac{1}{2}u^{ll}(u_{kli}u_{plr}+u_{pli}u_{klr}+u_{rli}u_{kpl})]^{2}\\
&-\frac{1}{2}u^{kk}u^{pp}u^{rr}u^{ii}\mid
u^{ll}(u_{kli}u_{plr}+u_{pli}u_{klr}+u_{rli}u_{kpl})\mid^{2}\\
=&2u^{kk}u^{pp}u^{rr}u^{ii}[u_{kpri}-\frac{1}{2}u^{ll}(u_{kli}u_{plr}+u_{pli}u_{klr}+u_{rli}u_{kpl})]^{2}\\
&-\frac{3}{2}B-\frac{6}{2}A\\
\geq&-\frac{3}{2}B-3A.
\endaligned
\end{equation*}
By $B\geq A$ and $\large B\geq\dfrac{1}{n}\sigma^{2}$ (cf. \cite{LLJ1}), (\ref{e3.3}) tells us that
\begin{equation*}\aligned
u^{ij}\sigma_{ij}-n\partial_{t}\sigma &\geq \frac{1}{2}B+B-A\\
&\geq\frac{1}{2n}\sigma^{2}.
\endaligned
\end{equation*}
\end{proof}

\begin{Corollary}\label{c3.3}
Assume $u_{0}(x)$ be a smooth function satisfying Condition A and
\begin{equation}\label{e3.4}
 \sup_{x\in \mathbb{R}^{n}}|D^{3}u_{0}|<+\infty.
 \end{equation}
 Set $\sigma_{0}=\sigma\mid_{t=0}$. Then
\begin{equation}\label{e3.5}
\sup_{x\in \mathbb{R}^{n}}\sigma\leq \frac{\sup_{x\in
\mathbb{R}^{n}}\sigma_{0}}{1+\frac{1}{2n^{2}}\sup_{x\in \mathbb{R}^{n}}\sigma_{0}t},\quad \forall t>0,
\end{equation}
i.e,
\begin{equation}\label{e3.6}
\sup_{x\in \mathbb{R}^{n}}|D^{3}u|^{2}\leq \frac{C\sup_{x\in
\mathbb{R}^{n}}|D^{3}u_{0}|^{2}}{1+\sup_{x\in \mathbb{R}^{n}}|D^{3}u_{0}|^{2}t},\quad \forall t>0,
\end{equation}
where $C$ is positive constant depending only on $n, \lambda,  \Lambda$.
\end{Corollary}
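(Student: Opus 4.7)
The plan is to apply the comparison principle of Lemma \ref{l3.1a}, using the explicit time-only super-solution obtained by solving the associated Riccati-type ODE. Set $M := \sup_{x\in\mathbb{R}^{n}}\sigma_{0}(x)$ and define
\begin{equation*}
\sigma^{*}(t) \;=\; \frac{M}{1+\frac{M}{2n^{2}}t}.
\end{equation*}
Because $\sigma^{*}$ is independent of $x$, $\sigma^{*}_{ij}\equiv 0$, and a direct calculation shows $\partial_{t}\sigma^{*}+\frac{1}{2n^{2}}(\sigma^{*})^{2}=0$; thus the super-solution inequality required by Lemma \ref{l3.1a} holds with equality. At $t=0$ we have $\sigma(x,0)=\sigma_{0}(x)\leq M=\sigma^{*}(0)$, while Lemma \ref{l3.2a} supplies the sub-solution inequality for $\sigma$.

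Before invoking the comparison principle I would verify the remaining regularity and boundedness hypotheses of Lemma \ref{l3.1a}. Proposition \ref{p3.1} gives $u\in C^{\infty}(\mathbb{R}^{n}\times(0,+\infty))$, hence $\sigma\in C^{2,1}$ there, while Condition A combined with the hypothesis (\ref{e3.4}) gives continuity and boundedness of $\sigma$ down to $t=0$; a uniform bound on $\mathbb{R}^{n}\times[0,+\infty)$ is then obtained by splicing a short-time estimate near $t=0$ with the bounds supplied by Proposition \ref{p3.1} for $t\geq\epsilon_{0}$. Once these are in hand, Lemma \ref{l3.1a} yields $\sigma(x,t)\leq \sigma^{*}(t)$ for every $t>0$ and $x\in\mathbb{R}^{n}$, which is precisely (\ref{e3.5}).

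To pass from (\ref{e3.5}) to (\ref{e3.6}), I would convert between $\sigma$ and $|D^{3}u|^{2}$ using Condition A. Diagonalising $u_{ij}$ at an arbitrary point one has $\sigma=\sum_{k,p,r}u^{kk}u^{pp}u^{rr}(u_{kpr})^{2}$, and the eigenvalue bounds $\lambda I\leq D^{2}u\leq\Lambda I$ (equivalently $\Lambda^{-1}I\leq [u^{ij}]\leq \lambda^{-1}I$) give
\begin{equation*}
\Lambda^{-3}|D^{3}u|^{2} \;\leq\; \sigma \;\leq\; \lambda^{-3}|D^{3}u|^{2},
\end{equation*}
with the analogous bounds for $\sigma_{0}$ versus $|D^{3}u_{0}|^{2}$. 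Substituting these two-sided bounds into (\ref{e3.5}) and using that $s\mapsto s/(1+cst)$ is monotone increasing in $s$ absorbs the factors of $\lambda,\Lambda,n$ into a single $C=C(n,\lambda,\Lambda)$, yielding (\ref{e3.6}).

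The main obstacle I anticipate is the bookkeeping step of verifying that the hypotheses of Lemma \ref{l3.1a} genuinely extend up to $t=0$, since Proposition \ref{p3.1} only controls higher derivatives on intervals $(\epsilon_{0},+\infty)$. The hypothesis (\ref{e3.4}) is exactly what is needed here: it provides an initial $C^{3}$ bound which, via short-time parabolic regularity, makes $\sigma$ continuous and bounded on all of $\mathbb{R}^{n}\times[0,+\infty)$ so that the maximum principle can be applied without change.
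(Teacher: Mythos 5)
Your proposal is correct and follows essentially the same route as the paper: set $\sigma_{*}=\sigma$, take the same explicit time-only super-solution $\sigma^{*}(t)=M/(1+\tfrac{M}{2n^{2}}t)$ solving the Riccati ODE, verify the boundedness hypothesis of Lemma \ref{l3.1a} (the paper cites Schauder estimates as in Proposition \ref{p3.1} for the uniform bound on $\sigma$), apply Lemma \ref{l3.1a} together with Lemma \ref{l3.2a} to get (\ref{e3.5}), and then use Condition A to pass between $\sigma$ and $|D^{3}u|^{2}$ for (\ref{e3.6}). You spell out more of the conversion step than the paper does, but the underlying argument is identical.
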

\begin{proof}

By Schauder estimates, as in the proof of
 Proposition \ref{p3.1} (cf. \cite{HB}), we have
\begin{equation*}
\sup_{x\in \mathbb{R}^{n}}\sigma\leq C.
\end{equation*}
Here, $C$ is a positive constant depending only on $n, \lambda,
\Lambda$ and  $ \sup_{x\in \mathbb{R}^{n}}|D^{3}u_{0}|$. Set
$\sigma_{*}=\sigma$ and
\begin{equation*}
\sigma^{*}=\frac{\sup_{x\in
\mathbb{R}^{n}}\sigma_{0}}{1+\frac{1}{2n^{2}}\sup_{x\in \mathbb{R}^{n}}\sigma_{0}t}.
\end{equation*}
In this case, one can verify that
\begin{equation*}
\frac{d}{dt}\sigma^{*}+\frac{1}{2n^{2}}\sigma^{*2}=0,
\end{equation*}
with
\begin{equation*}
\sigma^{*}|_{t=0}=\sup_{x\in \mathbb{R}^{n}}\sigma_{0}.
\end{equation*}
Then by Lemma \ref{l3.1a} we obtain (\ref{e3.5}) and (\ref{e3.6}).
\end{proof}

By now we have proved (\ref{e1.6a}) with an additional condition
(\ref{e3.4}). Using Krylov-Safonov  theory and interior Schauder
estimates of parabolic equations, we need not that  $u_{0}$
satisfies (\ref{e3.4}) for our theorem .

{\bf Proof of Theorem \ref{t1.2a}:}

By Proposition \ref{p3.1}, we have
\begin{equation}\label{e3.7}
 \sup_{x\in \mathbb{R}^{n}}|D^{3}u|_{t=\epsilon_{0}}\leq C,
 \end{equation}
where $C$ is a positive constant depending only on $n, \lambda,
\Lambda$ and $\dfrac{1}{\epsilon_{0}}$. Using Corollary \ref{c3.3},
it follows from (\ref{e3.7}) that we obtain (\ref{e1.6a}).

We will derive high order estimates (\ref{e1.7a}) via the blow up
argument. To do so, by \cite{ACH1}, we  employ a parabolic
scaling now. The remaining proof is routine.  Define
\begin{equation*}
y=\mu(x-x_{0}),\qquad s=\mu^{2}(t-t_{0}),
\end{equation*}
\begin{equation*}
u_{\mu}(y,s)=\mu^{2}[u(x,t)-u(x_{0},t_{0})-Du(x_{0},t_{0})\cdot (x-x_{0})].
\end{equation*}
It is easy to see that
\begin{equation*}
D^{2}_{y}u_{\mu}=D^{2}_{x}u,\qquad \frac{\partial}{\partial s}u_{\mu}=\frac{\partial}{\partial t}u
\end{equation*}
and
\begin{equation*}
D^{l}_{y}u_{\mu}=\mu^{2-l}D^{l}_{x}u
\end{equation*}
for all nonnegative integers $l$. By computing, $u_{\mu}(y,s)$
satisfies
\begin{equation*}\label{1.03}
\left\{ \begin{aligned}\frac{\partial u_{\mu}}{\partial
s}-\frac{1}{n}\ln \det D^{2}u_{\mu}&=0,
& s>0,\quad y\in \mathbb{R}^{n}, \\
 u_{\mu}&=u_{\mu}(y,s)|_{t=t_{0}}, & s=0,\quad y\in \mathbb{R}^{n},
\end{aligned} \right.
\end{equation*}
with
\begin{equation}\label{e3.9}
u_{\mu}(0,0)=Du_{\mu}(0,0)=0.
\end{equation}
Without loss of generality, we prove (\ref{e1.7a}) for $l=4$ only since the statement follows in a similar way for all $l$
by induction on $l$.

Note that
\begin{equation*}
 \sup_{x\in \mathbb{R}^{n}}|D^{4}u|< +\infty, \quad t\geq\epsilon_{0}.
 \end{equation*}
Suppose that $|D^{4}u|^{2}t^{2}$ were not bounded on
$\mathbb{R}^{n}\times [\epsilon_{0},+\infty)$.
 By Lemma 3.5 in \cite{HB1}, there would be a sequence $t_{k}\rightarrow +\infty$, such that
\begin{equation}\label{e3.10}
 2\rho_{k}:=\sup_{x\in \mathbb{R}^{n}}|D^{4}u(x,t_{k})|^{2}t^{2}_{k}\rightarrow +\infty
\end{equation}
and
\begin{equation}\label{e3.11}
 \sup_{x\in \mathbb{R}^{n},t\leq t_{k}}|D^{4}u(x,t)|^{2}t^{2}\leq 2\rho_{k}.
\end{equation}
Then there exists $x_{k}$ such that
\begin{equation}\label{e3.12}
|D^{4}u(x_{k},t_{k})|^{2}t^{2}_{k}\geq \rho_{k}\rightarrow +\infty
\qquad \mathrm{as}\quad t_{k}\rightarrow +\infty.
\end{equation}
Let $(y, Du_{\mu_{k}}(y,s))$ be a parabolic scaling of
$(x,Du(x,t))$ by
 $\displaystyle\mu_{k}=(\frac{\rho_{k}}{t^{2}_{k}})^{\frac{1}{4}}$ at $(x_{k},t_{k})$ for each $k$. Thus $u_{\mu_{k}}(y,s)$
 is a solution of a fully nonlinear parabolic equation
\begin{equation}\label{e3.13}
\frac{\partial u_{\mu_{k}}}{\partial s}-\frac{1}{n}\ln \det
D^{2}u_{\mu_{k}}=0,\quad -\mu^{2}_{k}t_{k} <s\leq 0,\quad y\in
\mathbb{R}^{n}.
\end{equation}
Combining (\ref{e3.10}), (\ref{e3.11}) with (\ref{e3.12}), there
holds
\begin{equation}\label{e3.14}
 |D^{2}_{y}u_{\mu_{k}}|=|D^{2}_{x}u|\leq n\Lambda, \qquad (y,s)\in \mathbb{R}^{n}\times (-\mu^{2}_{k}t_{k}, 0];
\end{equation}
\begin{equation*} \begin{aligned}
\forall y\in \mathbb{R}^{n},\quad
 |D^{3}_{y}u_{\mu_{k}}|^{2}&=\mu^{-2}_{k}|D^{3}_{x}u|^{2}\\
 &\leq \mu^{-2}_{k}t^{-1}_{k}C \\
 &=\rho_{k}^{-\frac{1}{2}}C\rightarrow 0
 \end{aligned}
\end{equation*}
and
\begin{equation}\label{e3.15}
\forall y\in \mathbb{R}^{n},\quad
|D^{4}_{y}u_{\mu_{k}}|^{2}=\mu^{-4}_{k}|D^{4}_{x}u|^{2}\leq 2;
\end{equation}
\begin{equation*}
|D^{4}_{y}u_{\mu_{k}}(0,0)|\geq 1.
 \end{equation*}
Using (\ref{e3.13}), by Schauder estimates, there exists a constant
$C$ depending only on $n, \lambda, \Lambda,
\dfrac{1}{\epsilon_{0}}$, such that for $l\geq 4$,  we derive
\begin{equation}\label{e3.16}
\forall (y,s)\in \mathbb{R}^{n}\times (-\mu^{2}_{k}t_{k}, 0],\quad
|D^{l}_{y}u_{\mu_{k}}|^{2}\leq C.
\end{equation}
Combining (\ref{e3.9}), (\ref{e3.14}), (\ref{e3.15}) and
(\ref{e3.16}) together, a diagonal sequence argument shows that
$u_{\mu_{k}}$ converges subsequently
 and uniformly on compact subsets in $\mathbb{R}^{n}\times (-\infty, 0]$ to a smooth function $u_{\infty}$ with
\begin{equation*}
\forall (y,s)\in \mathbb{R}^{n}\times (-\infty, 0],\quad
|D^{3}_{y}u_{\infty}|=0
\end{equation*}
and
\begin{equation*}
|D^{4}_{y}u_{\infty}(0,0)|\geq 1.
\end{equation*}
It is a contradiction. \qed

\section{self-shrinking solutions to lagrangian mean curvature flow in Pseudo-Euclidean space }
We now describe the relationship between Monge-Amp\`{e}re type
equations (\ref{e1.3}) and the logarithmic Monge-Amp\`{e}re  flow.

A solution $F(\cdot,t)$ of (\ref{e1.7}) is called self-shrinking if
it has the form
\begin{equation}\label{e2.1}
M_{t}=\sqrt{-t}M_{-1}\quad \mathrm{for}\,\, \mathrm{all}\,\, t<0,
\end{equation}
where $M_{t}=F(\cdot,t)$.

Assume that $F(x,t)$ is a self-shrinking  solution of (\ref{e1.7}).
Following Proposition 2.1 in \cite{HB},  $u(x,t)$ satisfies
\begin{equation}\label{e2.31}
\frac{\partial u}{\partial t}-\frac{1}{n}\ln
\det D^{2}u=0,
 \,\,\,t<0,\,\,\,\quad x\in \mathbb{R}^{n}.
\end{equation}
Hence,
\begin{equation*}
D(u(x,t)+tu(\frac{x}{\sqrt{-t}}, -1))=0,
\end{equation*}
$\mathrm{i.e.}$,
\begin{equation}\label{e2.41}
u(x,t)=-tu(\frac{x}{\sqrt{-t}}, -1),\,\,\,t<0.
\end{equation}
Thus combining (\ref{e2.31}), (\ref{e2.41}) and letting $t=-1$, we can
verify that  $u(x,-1)$ satisfies (\ref{e1.3}).

Conversely, if $u(x)$ solves  (\ref{e1.3}) , then using
(\ref{e2.41}), we can obtain a solution $F(x,t)$ to (\ref{e1.7})
which is   shrinking .
 Suppose that $u(x)$ solves
(\ref{e1.3}). Define $$u(x,t)=-tu(\frac{x}{\sqrt{-t}}).$$ One can
easily check the family $M_{t}=\{(x,Du(x,t))|x\in \mathbb{R}^{n}\}$
satisfying (\ref{e2.1}) and we also have
\begin{equation*}
\frac{\partial u}{\partial t}(x,t)=-u(\frac{x}{\sqrt{-t}})+\frac{1}{2}<\nabla u,\frac{x}{\sqrt{-t}}>=\frac{1}{n}\ln\det D^{2}u.
\end{equation*}
In other words, $u(x,t)$ solves the logarithmic gradient flow.  By
the above discussion,  there exists a family $r_{t}$, such that
$F(x,t)=(r_{t}(x),  Du(r_{t}(x),t))$
 is a self-shrinking solution of (\ref{e1.7}).

Based on Theorem \ref{t1.2a} according to the parabolic equation
(\ref{e1.6}), we will prove the following Lemma  by the same methods
in \cite{ACH2}.
\begin{lemma}\label{l3.1}
Let $u :\mathbb{R}^{n}\rightarrow \mathbb{R}$ be a smooth
solution of (\ref{e1.3}) which satisfies condition A. Then $u$ must
be a quadratic polynomial.
\end{lemma}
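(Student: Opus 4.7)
The plan is to turn the self-shrinker structure of $u$ into a one-parameter family of forward parabolic flows to which Theorem \ref{t1.2a} applies, and then send the scaling parameter $T\to\infty$. The central mechanism is that the constant in Theorem \ref{t1.2a} depends only on $n,\lambda,\Lambda,1/\epsilon_{0}$, so it is uniform across every flow whose initial datum shares the same Condition A constants.

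For each $T>1$, the first step is to take the rescaled initial datum $u_{T}^{0}(x):=T\,u(x/\sqrt{T})$. Since $D^{2}u_{T}^{0}(x)=D^{2}u(x/\sqrt{T})$, this datum inherits the bounds $\lambda I\le D^{2}u_{T}^{0}\le \Lambda I$ from $u$. Proposition \ref{p3.1} then furnishes a unique strictly convex solution $\hat{u}_{T}$ of \eqref{e1.6} on $\mathbb{R}^{n}\times[0,+\infty)$ with $\hat{u}_{T}(\cdot,0)=u_{T}^{0}$, which satisfies Condition A for every $t\ge 0$. Next, by the same computation that produced \eqref{e2.41} but shifted to positive time, the explicit self-similar expression
\[
u_{T}(x,t):=-(t-T)\,u\!\left(\tfrac{x}{\sqrt{T-t}}\right),\qquad t\in[0,T),
\]
satisfies \eqref{e1.6} on $[0,T)$ with $u_{T}(\cdot,0)=u_{T}^{0}$; indeed $D_{x}^{2}u_{T}(x,t)=D^{2}u(y)$ with $y=x/\sqrt{T-t}$, and $\partial_{t}u_{T}=-u(y)+\tfrac{1}{2}y\cdot\nabla u(y)$, which equals $\tfrac{1}{n}\ln\det D_{x}^{2}u_{T}$ by \eqref{e1.3}. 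Uniqueness in Proposition \ref{p3.1}, applied on each sub-interval $[0,T-\delta]$, identifies $\hat{u}_{T}$ with $u_{T}$ on $\mathbb{R}^{n}\times[0,T)$.

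Now Theorem \ref{t1.2a} with $\epsilon_{0}=1$ yields $|D^{3}\hat{u}_{T}(x,t)|^{2}\le C/t$ for all $t\ge 1$, where $C=C(n,\lambda,\Lambda)$ is \emph{independent of} $T$. Differentiating the self-similar formula gives $D_{x}^{3}u_{T}(x,t)=(T-t)^{-1/2}\,D^{3}u(x/\sqrt{T-t})$, so
\[
|D^{3}u(y)|^{2}=(T-t)\,|D^{3}u_{T}(x,t)|^{2}\le \frac{C\,(T-t)}{t},\qquad y=\frac{x}{\sqrt{T-t}}.
\]
Given any $y\in\mathbb{R}^{n}$ and any $T>2$, choose $t=T-1$ and $x=y$; then $T-t=1$ and the inequality reduces to $|D^{3}u(y)|^{2}\le C/(T-1)\to 0$ as $T\to\infty$. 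Hence $D^{3}u\equiv 0$ on $\mathbb{R}^{n}$, so $u$ is a quadratic polynomial.

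The only subtle point is the identification of the explicit shrinker $u_{T}$ with the Proposition \ref{p3.1} solution $\hat{u}_{T}$: the former develops a singularity at $t=T$ while the latter continues smoothly, so the agreement is guaranteed only on the pre-singular interval, which is precisely what is needed. All the hard analysis — third-order decay and the preservation of Condition A — is already packaged in Theorem \ref{t1.2a} and Proposition \ref{p3.1}; the lemma's work is simply to feed in the correct $T$-indexed family of initial data and let $T\to\infty$.
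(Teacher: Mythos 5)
Your argument is correct and is essentially the paper's own: both convert the self-shrinker into a forward solution of the logarithmic Monge--Amp\`ere flow, identify it with the Proposition~\ref{p3.1} solution by uniqueness, and apply the third-order bound from Theorem~\ref{t1.2a} to force $D^{3}u\equiv 0$ in a limit. The paper uses the single flow $v(x,t)=(1-t)\,u(x/\sqrt{1-t})$ with initial datum $u$ and sends $t\to 1^{-}$ in $|D^{3}u(x)|=\sqrt{1-t}\,|D^{3}v(x\sqrt{1-t},t)|\le C\sqrt{1-t}$, whereas you vary the scaling parameter $T$ in the family $u_{T}$ and evaluate at $t=T-1$; these are dual parametrizations of the same computation.
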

\begin{proof}
If $u$ is a smooth solution to (\ref{e1.3}), then
\begin{equation*}
v(x,t)=(1-t)u(\frac{x}{\sqrt{1-t}})
\end{equation*}
is a solution to (\ref{e1.6}) for $t\in (0,1)$ with initial data
$u(x)$. Hence  applying Proposition \ref{p3.1} to $v(x,t)$ we show
that  this solution is unique.  By Theorem \ref{t1.2a}, there is
some constant $C$, such that $|D^{3}v(x,t)|\leq C$  for
$t\geq\epsilon_{0}$ and any $x\in \mathbb{R}^{n}$ . But one checks
directly that
\begin{equation*}
D^{3}v(x,t)=\frac{1}{\sqrt{1-t}}D^{3}u(\frac{x}{\sqrt{1-t}}).
\end{equation*}
This implies
\begin{equation*}
|D^{3}u(x)|=|D^{3}u(\frac{x\sqrt{1-t}}{\sqrt{1-t}})|=\sqrt{1-t}|D^{3}v(x\sqrt{1-t},t)|\leq C\sqrt{1-t}
\end{equation*}
for any $x$. It follows that $D^{3}u(x)\equiv 0$ by letting $t\rightarrow 1$. Then $u$ must be a quadratic polynomial.
Lemma \ref{l3.1} is established.
\end{proof}
In fact, using the interior estimated skills (c.f. \cite{GT}), we
can get the upper bound for the second derivatives of solutions of
(\ref{e1.3}) under the condition  (\ref{e1.511}).
\begin{lemma}\label{l3.2}
Let $u :\mathbb{R}^{n}\rightarrow \mathbb{R}$ be a  smooth
strictly convex solution to (\ref{e1.3})  and suppose $\mu(x)$ satisfies
(\ref{e1.511}). Then there exists a positive constant $R_{0}$
depending only on $\mu(x)$, such that
\begin{equation}\label{e5.1}
 D^{2}u(x)\leq C I,\qquad x\in
\mathbb{R}^{n},
\end{equation}
where $C$ is a positive constant depending only on $\mu(x)$ and $\|u\|_{C^{2}(\bar{B}_{R_{0}+1})}$.
$B_{R_{0}}$  is a ball centered at $0$ with radius $R_{0}$ in $\mathbb{R}^{n}$.
\end{lemma}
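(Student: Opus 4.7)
The plan is to combine the given interior $C^2$-bound on $\bar B_{R_0+1}$ with a Pogorelov-type maximum principle argument in the exterior, driven by the linearised self-shrinker operator
$$\mathcal{L} := u^{ij}\partial_i\partial_j - \tfrac{n}{2}\,x\cdot\nabla.$$
Differentiating the equation $\log\det D^2u = n\bigl(-u + \tfrac{1}{2}\,x\cdot\nabla u\bigr)$ twice in a constant direction $\xi$ and rearranging (the elliptic analogue of the calculation carried out for $\sigma$ in Lemma~\ref{l3.2a}) yields
$$\mathcal{L}(u_{\xi\xi}) \;=\; u^{ia}u^{jb}u_{ab\xi}u_{ij\xi} \;\geq\; 0,$$
so $u_{\xi\xi}$ is an $\mathcal{L}$-subsolution for every unit vector $\xi$.

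First I would use (\ref{e1.511}) to pick $\delta>0$ and $R_0>0$ with $|x|^2\mu(x)\geq \tfrac{2(n-1)}{n}+2\delta$ on $\{|x|\geq R_0\}$, so that on $\bar B_{R_0+1}$ the bound is immediate from the hypothesis $u\in C^2$. For the exterior, isolating the largest eigenvalue in the trace of the inverse Hessian gives
$$\sum_i u^{ii}(x) \;\leq\; \frac{n-1}{\mu(x)} + \frac{1}{\lambda_{\max}(x)} \;\leq\; \Bigl(\tfrac{n}{2}-c\Bigr)|x|^2 + \frac{1}{\lambda_{\max}(x)},$$
for some $c=c(\delta)>0$, and consequently
$$\mathcal{L}(|x|^2) \;=\; 2\sum_i u^{ii} - n|x|^2 \;\leq\; -2c|x|^2 + \frac{2}{\lambda_{\max}(x)}.$$
At every $x$ with $|x|\geq R_0$, either $\lambda_{\max}(x)\leq c^{-1}|x|^{-2}$ (and the pointwise bound is already achieved), or $\mathcal{L}(|x|^2)(x)<0$ strictly; thus $|x|^2$ serves as an exterior strict supersolution whenever pointwise control has not already been secured.

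With this sub/supersolution pair in hand I would run a Pogorelov argument using the cutoff auxiliary function
$$\Phi(x,\xi) \;=\; \eta(x)^2\,u_{\xi\xi}(x)$$
on $\bar B_R \times S^{n-1}$, where $\eta$ is a standard cutoff with $\eta\equiv 1$ on $B_{R/2}$, $\supp \eta\subset B_R$, and $R\gg R_0+1$. At an interior maximum $(x_0,\xi_0)$ with $|x_0|>R_0$, I would diagonalise $D^2u(x_0)$ so that $\xi_0$ is the eigenvector of $\lambda_{\max}(x_0)$; the first-order condition $\nabla\Phi(x_0)=0$ expresses $\nabla u_{\xi_0\xi_0}$ in terms of $\nabla\eta$, while the second-order condition $u^{ij}\Phi_{ij}(x_0)\leq 0$, combined with $\mathcal{L}(u_{\xi_0\xi_0})\geq 0$ and with the supersolution property of $|x|^2$ above, produces a pointwise bound of the form $\eta^2(x_0)\,\lambda_{\max}(x_0)\leq C$, with $C$ depending only on $n$, $\delta$, and $\|u\|_{C^2(\bar B_{R_0+1})}$ but independent of $R$. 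Letting $R\to\infty$ yields the uniform estimate on $D^2u$.

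The main obstacle is that we do not have any a priori polynomial growth on $u_{\xi\xi}$ at infinity, so the cutoff has to be chosen carefully to ensure that all error terms from $\nabla\eta$ and $\nabla^2\eta$ at the interior maximum are absorbed purely by the two inequalities above. This is exactly the point at which the threshold $\tfrac{2(n-1)}{n}$ in (\ref{e1.511}) enters: the drift $\tfrac{n}{2}x\cdot\nabla$ contributes a term $n|x|^2$ in $\mathcal{L}(|x|^2)$, while the trace bound contributes $2(n-1)/\mu$ (after separating off the one eigenvalue $\lambda_{\max}$ that may be large), and the strict inequality $(n-1)/\mu < \tfrac{n}{2}|x|^2$ that makes the exterior supersolution work is equivalent to $|x|^2\mu(x)>\tfrac{2(n-1)}{n}$.
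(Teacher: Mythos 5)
Your proposal is essentially the paper's argument, recast in somewhat different notation. Both are Pogorelov-type interior maximum-principle estimates driven by the same ingredients: (i) differentiating the equation twice and observing that, with $\mathcal{L}=u^{ij}\partial_i\partial_j-\tfrac{n}{2}x\cdot\nabla$, one has $\mathcal{L}(u_{\xi\xi})=u^{ia}u^{jb}u_{ab\xi}u_{ij\xi}\geq 0$ (this is exactly the paper's equation (\ref{e5.5}) rearranged); (ii) isolating the one large eigenvalue in $\sum_i u^{ii}\leq \tfrac{n-1}{\mu}+\tfrac{1}{\lambda_{\max}}$ and using (\ref{e1.511}) to make the drift term dominate; (iii) passing to a limit in the cutoff parameter. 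Your ``$|x|^2$ as exterior supersolution'' framing is a clean way of seeing why the threshold $\tfrac{2(n-1)}{n}$ is the relevant one, and it is exactly the inequality hiding in the paper's manipulation $-2k\sum u^{ii}u_{11}+nk|x|^2 u_{11}\geq k c|x|^2 u_{11}-2k$.

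The one point that needs tightening is the choice of cutoff. You propose ``a standard cutoff with $\eta\equiv 1$ on $B_{R/2}$, $\supp\eta\subset B_R$,'' but in the second-order step the term $u^{ij}\eta_{ij}\,u_{\xi\xi}$ appears with an unfavourable sign, and for a generic radial bump the Hessian $D^2\eta$ has the form $g''\tfrac{x\otimes x}{r^2}+\tfrac{g'}{r}(I-\tfrac{x\otimes x}{r^2})$ with $g''$ necessarily changing sign on the transition annulus. Since the coefficients $u^{ii}$ can be as large as $|x|^2/\lambda\sim R^2$, the mixed-sign $D^2\eta\sim R^{-2}$ does not obviously produce the clean cancellation against the drift contribution $-\tfrac{n}{2}u_{\xi\xi}\langle x,D\eta\rangle$ that the argument needs. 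The fix is to take $\eta$ quadratic in $|x|$ in the exterior, as the paper does: $\eta_k(x)=-k\bigl(|x|^2-(R_0+1)^2\bigr)+\tfrac34$ for $|x|\geq R_0+1$, so that $D^2\eta_k=-2kI$ exactly, which makes $u^{ij}(\eta_k)_{ij}=-2k\sum u^{ii}$ and $\langle x,D\eta_k\rangle=-2k|x|^2$, i.e. precisely $-k\,\mathcal{L}(|x|^2)$ up to the drift sign. The paper's $\eta_k$ also becomes negative at large $|x|$, which (since $u_{\xi\xi}>0$) automatically forces $\eta_k u_{\xi\xi}$ to attain its maximum; then letting $k\downarrow 0$ plays the role of your $R\to\infty$. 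If you replace ``standard cutoff'' by this quadratic one (equivalently, $\eta=1-|x|^2/R^2$ on $B_R$), your outline closes and reproduces the paper's estimate.
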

\begin{proof}
Denote $$u_{i}=\frac{\partial u}{\partial x_{i}},\,\,\, u_{ij}=\frac{\partial^{2}u}{\partial x_{i}\partial x_{j}},\,\,\,
u_{ijk}=\frac{\partial^{3}u}{\partial x_{i}\partial x_{j}\partial x_{k}}, \cdots $$and
$$[u^{ij}]=[ u_{ij}]^{-1},\quad L=u^{ij}\frac{\partial^{2}}{\partial x_{i}\partial x_{j}}.$$
Let $\gamma$ denotes a vector field. Set
$$u_{\gamma}=D_{\gamma}u,\,\,\,u_{\gamma\gamma}=D^{2}_{\gamma\gamma}u.$$
 We will prove that
$$\sup_{x\in \mathbb{R}^{n},\,\gamma\in\mathbb{S}^{n-1}}u_{\gamma\gamma}\leq C.$$
 By (\ref{e1.511}), there is some constant $\lambda>\dfrac{2(n-1)}{n}$  and some constant $R_{0}$, such that
$$|x|^{2}\mu(x)\geq \lambda,$$
for $|x|>R_{0}+1$. One can define a family of smooth function by
$$f_{k}(t)=\left\{ \begin{aligned}
&1 , \quad
\qquad\quad\qquad\qquad\quad\quad\qquad\quad 0\leq t\leq R_{0}, \\
& \varphi \qquad\qquad\quad\quad\qquad\quad\qquad\qquad R_{0}\leq t\leq R_{0}+1,\\
&-k[t^{2}-(R_{0}+1)^{2}]+\frac{3}{4}, \qquad \quad t\geq R_{0}+1,
\end{aligned} \right.$$
where $0<k\leq 1$,  and $(t,\varphi(t))$ is a smooth curve
connecting two points $(R_{0},1)$, $(R_{0}+1,\dfrac{3}{4})$
satisfying $\dfrac{3}{4}\leq \varphi\leq 1$.

We view $u_{\gamma\gamma}$ as a function on
$\mathbb{R}^{n}\times\mathbb{S}^{n-1}$. It is easy to see that
$f_{k}(|x|)u_{\gamma\gamma}$ always attains its maximum at
$$(p,\xi)\in \{(x,\gamma)\in \mathbb{R}^{n}\times\mathbb{S}^{n-1}|f_{k}(|x|)>0\}.$$
  By (\ref{e1.511}), we have  $u_{\gamma\gamma}>0$.  Let
  $$\eta_{k}(x)=f_{k}(|x|),\,\,\, w=\eta_{k}(x)u_{\xi\xi}.$$
Then at $p$,
\begin{equation}\label{e5.2}
0\geq
Lw=u^{ij}(\eta_{k}u_{\xi\xi})_{ij}=u^{ij}(\eta_{k})_{ij}u_{\xi\xi}+2u^{ij}(\eta_{k})_{i}(u_{\xi\xi})_{j}
+\eta_{k}u^{ij}(u_{\xi\xi})_{ij}.
\end{equation}
We assume that
$$p\in\{x\in \mathbb{R}^n||x|> R_{0}+1\}.$$
Then at $p$, the derivative $u_{\xi\xi}$ will be the maximum
eigenvalue of the Hessian $D^{2}u$. By a rotation, we can assume
that $D^{2}u$  is diagonal with $\xi$ as the $x_{1}$ direction. In
this case, $u_{\xi\xi}=u_{11}$. Then at $p$, there holds
\begin{equation*}
(\eta_{k}u_{11})_{j}=0, \,\,\,\,\, j=1,2,\cdots,n.
\end{equation*}
Hence
\begin{equation}\label{e5.3}
(u_{11})_{j}=-u_{11}\frac{(\eta_{k})_{j}}{\eta_{k}},\,\,\,(\eta_{k})_{j}=-\eta_{k}\frac{(u_{11})_{j}}{u_{11}},\,\,\,\,\, j=1,2,\cdots,n.
\end{equation}
Clearly, by (\ref{e5.3}),
\begin{equation}\aligned\label{e5.4}
2u^{ij}(\eta_{k})_{i}(u_{11})_{j}=&u^{11}(\eta_{k})_{1}u_{111}+u^{11}(\eta_{k})_{1}u_{111}+2\sum_{i\neq 1}\frac{(\eta_{k})_{i}u_{11i}}{u_{ii}}\\
=&-u^{11}\frac{(\eta_{k})_{1}(\eta_{k})_{1}}{\eta_{k}}u_{11}-u^{11}\eta_{k}\frac{u^{2}_{111}}{u_{11}}-2\sum_{i\neq 1}\eta_{k}\frac{u^{2}_{11i}}{u_{ii}u_{11}}.
\endaligned
\end{equation}
Let $<\cdot,\cdot>$ be the inner product in $\mathbb{R}^{n}$.
Differentiating  the equation (\ref{e1.3}), we have
\begin{equation*}
\frac{1}{n}u^{ij}u_{ij1}=-\frac{1}{2}u_{1}+\frac{1}{2}<x,Du_{1}>,
\end{equation*}
\begin{equation}\label{e5.5}
\frac{1}{n}u^{ij}u_{11ij}=\frac{1}{n}\sum_{i,j=1}^{n}\frac{u^{2}_{ij1}}{u_{ii}u_{jj}}+\frac{1}{2}<x,Du_{11}>.
\end{equation}
Substituting (\ref{e5.4}), (\ref{e5.5}) into (\ref{e5.2}) and using
$$(\eta_{k})_{i}=-2kx_{i},\,\,\,\,\, (\eta_{k})_{ij}=-2k\delta_{ij},$$
we have, at $p$,
\begin{equation*}\aligned
0\geq & -\frac{1}{n}2k\sum_{i=1}^{n}u^{ii}u_{11}-\frac{1}{n}\frac{(\eta_{k})^{2}_{1}}{\eta_{k}}-
\frac{1}{n}\eta_{k}\frac{u^{2}_{111}}{u^{2}_{11}}-
\frac{1}{n}2\eta_{k}\sum_{i\neq 1}\frac{u^{2}_{11i}}{u_{ii}u_{11}}\\
&+\frac{1}{n}\eta_{k}\sum_{i,j=1}^{n}\frac{u^{2}_{ij1}}{u_{ii}u_{jj}}+\frac{\eta_{k}}{2}<x,Du_{11}>.
\endaligned
\end{equation*}
Note that
\begin{equation*}
\eta_{k}\sum_{i,j=1}^{n}\frac{u^{2}_{ij1}}{u_{ii}u_{jj}}\geq
\eta_{k}\frac{u^{2}_{111}}{u^{2}_{11}} +2\eta_{k}\sum_{i\neq
1}\frac{u^{2}_{11i}}{u_{ii}u_{11}}.
\end{equation*}
Combining the above two inequalities, at $p$, we get
\begin{equation*}
0\geq  -\frac{1}{n}2k\sum_{i=1}^{n}u^{ii}u_{11}-\frac{1}{n}\frac{(\eta_{k})^{2}_{1}}{\eta_{k}}
+\frac{\eta_{k}}{2}<x,Du_{11}>.
\end{equation*}
In view of (\ref{e5.3}),
\begin{equation*}
\frac{\eta_{k}}{2}<x,Du_{11}>=-\frac{u_{11}}{2}<x,D \eta_{k}>.
\end{equation*}
Then at $p$,
\begin{equation*}
\frac{(\eta_{k})^{2}_{1}}{\eta_{k}}\geq  -2k\sum_{i=1}^{n}u^{ii}u_{11}
-n\frac{u_{11}}{2}<x,D \eta_{k}>.
\end{equation*}
Using $u_{ii}\geq \dfrac{\lambda}{|x|^{2}}$ for $i\geq 2$, we deduce from the above that
\begin{equation*}
\frac{4k^{2}x^{2}_{1}}{\eta_{k}}\geq
-2k-\frac{2k(n-1)}{\lambda}|x|^{2}u_{11}+nk|x|^{2}u_{11},
\end{equation*}
i.e., at $p$,
\begin{equation*}
\frac{4kx^{2}_{1}+2\eta_k}{n|x|^{2}-\frac{2(n-1)}{\lambda}|x|^{2}}\geq  \eta_{k}u_{11}.
\end{equation*}
Thus   if $p\in\{x\in\mathbb{R}^{n}||x|> R_{0}+1\}$,  then there holds
\begin{equation}\label{e5.6}
\max_{x\in\mathbb{R}^{n},\gamma\in
\mathbb{S}^{n-1}}\eta_{k}u_{\gamma\gamma}\leq\frac{4k\lambda
x^{2}_{1}+2\lambda\eta_k}{(\lambda-\frac{2(n-1)}{n})n|x|^{2}}\leq
\frac{6\lambda }{(\lambda-\frac{2(n-1)}{n})n}.
\end{equation}
 And  if $p\in\{x\in\mathbb{R}^{n}||x|\leq R_{0}+1\}$, then
\begin{equation}\label{e5.7}
\max_{x\in\mathbb{R}^{n},\gamma\in
\mathbb{S}^{n-1}}\eta_{k}u_{\gamma\gamma}\leq
\|u\|_{C^{2}(\bar{B}_{R_{0}+1})}.
\end{equation}
From (\ref{e5.6}) and (\ref{e5.7}), we obtain
\begin{equation}\label{e5.712}
\max_{x\in\mathbb{R}^{n},\gamma\in
\mathbb{S}^{n-1}}\eta_{k}u_{\gamma\gamma}\leq \frac{6\lambda
}{(\lambda-\frac{2(n-1)}{n})n}+\|u\|_{C^{2}(\bar{B}_{R_{0}+1})}.
\end{equation}
For any fixed $x\in \mathbb{R}^{n}$ and $\gamma\in
\mathbb{S}^{n-1}$, let $k$ converges to $0$,  then
$$\frac{3}{4}u_{\gamma\gamma}\leq \frac{6\lambda }{(\lambda-\frac{2(n-1)}{n})n}+\|u\|_{C^{2}(\bar{B}_{R_{0}+1})}.$$
So we obtain
$$u_{\gamma\gamma}\leq \frac{24\lambda }{(3\lambda-\frac{6(n-1)}{n})n}+\frac{4}{3}\|u\|_{C^{2}(\bar{B}_{R_{0}+1})}$$
and Lemma \ref{l3.2} is established.
\end{proof}
{\bf Proof of Theorem \ref{t1.1}:}

Introduce the Legendre transformation of $u$,
\begin{equation*}
y_{i}=\frac{\partial u}{\partial x_{i}}
,\,\,i=1,2,\cdots,n,\,\,\,u^{*}(y_{1},\cdots,y_{n}):=\sum_{i=1}^{n}x_{i}\frac{\partial u}{\partial x_{i}}-u(x).
\end{equation*}
In terms of $y_{1},\cdots,y_{n}, u^{*}(y_{1},\cdots,y_{n})$, one can easily check that
$$\frac{\partial^{2} u^{*}}{\partial y_{i}\partial y_{j}}=[\frac{\partial^{2} u}{\partial x_{i}\partial x_{j}}]^{-1}.$$
Thus, in view of (\ref{e5.1}), $$D^{2}u^{*}\geq \frac{1}{C}I.$$ And
the PDE (\ref{e1.3}) can be rewritten as
\begin{equation*}
\det D^{2}u^{*}=\exp\{n(-u^{*}+\frac{1}{2}\sum_{i=1}^{n}y_{i}\frac{\partial u^{*}}{\partial y_{i}})\}.
\end{equation*}
Using Lemma \ref{l3.2}, we have
 $$D^{2}u^{*}\leq CI.$$
So
$$\frac{1}{C}I\leq D^{2}u\leq CI.$$
An application of  Lemma \ref{l3.1} yields the desired result. \qed

\section{self-shrinking solutions to lagrangian mean curvature flow in Euclidean space}
In this section, first we present the proof of Theorems \ref{t1.6}.
Then, by the Lewy rotation, we obtain  Theorem \ref{c1.7}.

{\bf Proof of Theorem \ref{t1.6}:}

For $x\in\mathbb{R}^n$, let
\begin{eqnarray*}\label{e4.1}
\eta_k(x)=\left\{\begin{matrix}1& |x|\leqq R_0\\
-k(|x|^2-R_0^2)+1 & |x|\geqq R_0\end{matrix}\right..
\end{eqnarray*}
Here $R_0$ and $k<1$ be two positive constants which we will
determine later. Similar to \cite{Y}, we denote
\begin{eqnarray*}\label{e4.2}
g_{ij}=\delta_{ij}+\sum_{k=1}^nu_{ik}u_{kj}.
\end{eqnarray*}
By (\ref{e1.10}), we have
\begin{eqnarray*}\label{e4.3}
-I\leq D^2u\leq I.
\end{eqnarray*}
We set
\begin{eqnarray*}\label{e4.4}
\phi(x)=\eta_ke^{\alpha\ln\det g},
\end{eqnarray*}
where $\alpha$ is a positive constant which will be determined
later. Assume that at the point $p$, $\phi$ attains its maximum
value. Obviously, at $p$, $\eta_k(p)>0$. If
$$p\in \{X\in\mathbb{R}^n||X|> R_0\},$$ then
\begin{eqnarray}\label{e4.5}
(\eta_k)_{ij}=-2k\delta_{ij},\ \ (\eta_k)_i=-2kx_i.
\end{eqnarray}
At $p$, we get
\begin{eqnarray}\label{e4.6}
D \eta_k+\eta_k\alpha D \ln \det g=0,
\end{eqnarray}
and
\begin{equation*}\aligned
0\geq& g^{ij}\phi_{ij}\\
=&g^{ij}(\eta_k)_{ij}e^{\alpha\ln\det g}+2g^{ij}(\eta_k)_i(e^{\alpha\ln\det g})_j+g^{ij}\eta_k(e^{\alpha\ln\det g})_{ij}\\
=&e^{\alpha\ln\det g}[g^{ij}(\eta_k)_{ij}+2g^{ij}(\eta_k)_i(\alpha\ln\det g)_j+\eta_kg^{ij}(\alpha\ln\det g)_{ij}\\
&+\eta_k g^{ij}(\alpha\ln\det g)_i(\alpha\ln\det g)_j].
\endaligned
\end{equation*}
We pick a coordinate system satisfying $u_{ij}=u_{ii}\delta_{ij}$ at
$p$. Then, inserting (\ref{e4.5}) and (\ref{e4.6}) to the above
inequality, at $p$, we get
\begin{equation}\label{e4.8}
0\geq-2k\sum_i\frac{1}{1+u_{ii}^2}+\eta_kg^{ij}(\alpha\ln\det
g)_{ij}-\eta_k g^{ij}(\alpha\ln\det g)_i(\alpha\ln\det g)_j.
\end{equation}
Differentiating (\ref{e1.9}) twice, we have
\begin{eqnarray*}
g^{lk}u_{lki}&=&-\frac{u_i}{2}+\frac{1}{2}<x,D u_i>,\\
g^{lk}u_{lkij}&=&g^{lm}g^{nk}u_{lki}\sum_{s=1}^n(u_{msj}u_{sn}+u_{ms}u_{snj})+\frac{1}{2}<x,D
u_{ij}>.
\end{eqnarray*}
Similar to Lemma 2.1 in \cite{Y}, we arrive at, at $p$,
\begin{eqnarray}\label{e4.9}
&&g^{ij}(\ln\det g)_{ij}\\
&=&g^{ij}(g^{ab})_i(g_{ab})_j+g^{ij}g^{ab}(g_{ab})_{ij}\nonumber\\
&=&\sum_{i,a,b=1}^n-g^{ii}g^{aa}g^{bb}u_{abi}^2(u_{aa}+u_{bb})^2+\sum_{a,b=1}^ng^{ij}g^{ab}u_{abij}(u_{aa}+u_{bb})\nonumber\\
&&+\sum_{i,k,a=1}^n2g^{aa}g^{ii}u_{aki}^2\nonumber\\
&=&2\sum_{a,b,c=1}^ng^{aa}g^{bb}g^{cc}u_{abc}^2(1+u_{aa}u_{bb})+\sum_{a,b,c=1}^ng^{aa}u_{aa}<x,D
u_{aa}>\nonumber\\
&=&2\sum_{a,b,c=1}^ng^{aa}g^{bb}g^{cc}u_{abc}^2(1+u_{aa}u_{bb})+\frac{1}{2}<x,
D \ln\det g>\nonumber.
\end{eqnarray}
Inserting the above equality into (\ref{e4.8}) and combining
(\ref{e4.5}) with (\ref{e4.6}), we obtain
\begin{eqnarray}
0&\geq&-2kn+\eta_k\alpha\frac{1}{2}<x,D\ln\det g>+2\eta_k\alpha
\sum_{a,b,c=1}^ng^{aa}g^{bb}g^{cc}u_{abc}^2(1+u_{aa}u_{bb})\nonumber\\
&&-4\eta_k\alpha^2\sum_{i=1}^ng^{ii}(\sum_{a=1}^ng^{aa}u_{aa}u_{aai})^2\nonumber\\
&\geq&-\frac{1}{2}<x,D \eta_k>-2kn+2\eta_k\alpha
\sum_{a,b,c=1}^ng^{aa}g^{bb}g^{cc}u_{abc}^2(1+u_{aa}u_{bb})\nonumber\\
&&-4n^2\eta_k\alpha^2\sum_{a,b=1}^ng^{bb}g^{aa}g^{aa}u^2_{aa}u_{aab}^2\nonumber\\
&\geq&k(|x|^2-2n)+2\eta_k\alpha(1-2n^2\alpha)\sum_{a,b=1}^ng^{bb}g^{aa}g^{aa}u^2_{aa}u_{aab}^2\nonumber.
\end{eqnarray}
If we take
\begin{eqnarray}\label{e4.10}
R_0>\sqrt{2n},\text{  and  } \alpha<\frac{1}{2n^2},
\end{eqnarray}
we have a contradiction.

Assume the function $\ln\det g$ is not  constant in $\mathbb{R}^n$.
Then there is a ball $B_{R_0}$ centered at $0$ with radius $R_0$
satisfying (\ref{e4.10}), such that the function $\ln\det g$ is not
a constant in $B_{R_0}$. Suppose that $\ln\det g$ attains its
maximum value in $B_{R_0}$. Applying strong maximum principle to
(\ref{e4.9}), we obtain $\ln \det g$ is a constant. This is a
contradiction. Hence $\ln\det g$ attains its maximum value only on
the boundary $\partial B_{R_0}$. Similarly, in $B_{\sqrt{R_0^2+1}}$,
$\ln \det g$ also attains its maximum value only on the boundary
$\partial B_{\sqrt{R_0^2+1}}$. We assume that the points $p_1$ and
$p_2$ be maximum value points with respect to $\partial B_{R_0}$ and
$\partial B_{\sqrt{R_0^2+1}}$, namely,
\begin{eqnarray}
\max_{\overline{B}_{R_0}}\ln\det g=\ln\det g (p_1),\nonumber\\
\max_{\overline{B}_{(R_0^2+1)^{1/2}}}\ln\det g=\ln\det g
(p_2).\nonumber
\end{eqnarray}
Then
\begin{eqnarray}
\ln\det g (p_1)\leq\ln\det g (p_2).\nonumber
\end{eqnarray}
But the equality can not hold. In fact, if the equality holds, then
the function $\ln\det g$ achieves  its maximum value in the interior
of the domain $B_{\sqrt{R_0^2+1}}$. This is a contradiction. So we
can choose $k$ sufficiently small such that
\begin{eqnarray}
\phi(P_1)=(\det g)^{\alpha}(p_1)<(1-k)(\det
g)^{\alpha}(p_2)=\phi(p_2).\nonumber
\end{eqnarray}
This means that, for fixed $u$, we can choose suitable $k$ such that
the maximum value of $\phi$ only occurs in the set
$$\{X\in\mathbb{R}^n||X|>R_0\}.$$ But we have proved that it is
impossible. Thus the discussion implies the function $\ln\det g$
is a constant. So by (\ref{e4.9}), we have
\begin{eqnarray}
g^{aa}g^{bb}g^{cc}u_{abc}^2(1+u_{aa}u_{bb})=0.\nonumber
\end{eqnarray}
Now we can use the same argument of Proposition 2.1 in \cite{Y}. We obtain
\begin{eqnarray}
u^2_{abc}(1+u_{aa}u_{bb})=u^2_{abc}(1+u_{bb}u_{cc})=u^2_{abc}(1+u_{cc}u_{aa}).\nonumber
\end{eqnarray}
Observe that one of $u_{aa}u_{bb},u_{bb}u_{cc}$ and $u_{cc}u_{aa}$
must be nonnegative, we get, at every point,
$$u_{abc}=0.$$ Consequently, $u$ is a quadric polynomial.
\qed
\begin{proposition}\label{p4.1}
Assume that  $u$  be a smooth solution to (\ref{e1.9}) and  $D^{2}u$  satisfies
\begin{equation}\label{e4.11}
 D^{2}u \geq 0.
 \end{equation}
 Set Lewy rotation\cite{Y},
\begin{equation}\label{e4.12}
\left\{ \begin{aligned}\bar{x}&=\frac{x+D u(x)}{\sqrt{2}} \\
D\bar{u}(\bar{x})&=\frac{-x+D u(x)}{\sqrt{2}}
\end{aligned} \right..
\end{equation}
 Then  $\bar{u}$ is a smooth solution to (\ref{e1.9}) and $D^2\bar{u}$ satisfies  (\ref{e1.10}).
\end{proposition}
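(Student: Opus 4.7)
The plan is to view the transformation as a $\pi/4$ rotation in each pair of conjugate planes applied to the Lagrangian graph $\{(x,Du(x))\}\subset\mathbb{R}^{2n}$, and then to verify in turn that the rotated submanifold is again a Lagrangian graph over $\bar x$, that its generating potential has Hessian in $[-I,I]$, and that it satisfies the same self-shrinking equation (\ref{e1.9}). The hypothesis (\ref{e4.11}) enters precisely where one needs $I+D^{2}u$ to be invertible in order to project the rotated graph onto the $\bar x$-axis.

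First I would show that $\Phi:x\mapsto\bar x=(x+Du)/\sqrt{2}$ is a global $C^{\infty}$ diffeomorphism of $\mathbb{R}^{n}$: its Jacobian $(I+D^{2}u)/\sqrt{2}$ is positive definite by (\ref{e4.11}), and the monotonicity of $x\mapsto x+Du(x)$ (immediate from $D^{2}u\geq 0$) delivers global injectivity and properness. Applying the chain rule to the given relation $D\bar u(\bar x)=(-x+Du)/\sqrt{2}$ produces the candidate
\[
D^{2}\bar u=(D^{2}u-I)(I+D^{2}u)^{-1},
\]
which is symmetric because $D^{2}u-I$ and $(I+D^{2}u)^{-1}$ are simultaneously diagonalizable; consequently the one-form $\bar y\cdot d\bar x$ is closed on $\mathbb{R}^{n}$ and may be integrated to give
\[
\bar u(\bar x)=u(x)-\tfrac{1}{2}\,x\cdot Du+\tfrac{1}{4}\bigl(|Du|^{2}-|x|^{2}\bigr)+\tfrac{n\pi}{4},\qquad x=\Phi^{-1}(\bar x),
\]
where the additive constant $n\pi/4$ is chosen to match the verification below. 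The eigenvalues of $D^{2}\bar u$ are $\bar\lambda_{i}=(\lambda_{i}-1)/(\lambda_{i}+1)$, and since $\lambda_{i}\geq 0$ they lie in $[-1,1)\subset[-1,1]$, so (\ref{e1.10}) is satisfied.

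The remaining step is to verify (\ref{e1.9}) for $\bar u$, which is the delicate point. The angle-addition identity $\arctan\tfrac{\lambda-1}{\lambda+1}=\arctan\lambda-\tfrac{\pi}{4}$ (valid for $\lambda>-1$) yields
\[
\sum_{i=1}^{n}\arctan\bar\lambda_{i}=\Bigl(-u+\tfrac{1}{2}\,x\cdot Du\Bigr)-\tfrac{n\pi}{4}
\]
by (\ref{e1.9}) applied to $u$. On the other hand, $\bar x\cdot D\bar u(\bar x)=\tfrac{1}{2}(|Du|^{2}-|x|^{2})$ by direct computation, and substituting the explicit formula for $\bar u$ gives $-\bar u+\tfrac{1}{2}\bar x\cdot D\bar u=-u+\tfrac{1}{2}\,x\cdot Du-\tfrac{n\pi}{4}$, matching the preceding display. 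The main obstacle is precisely this final bookkeeping: the $\arctan$ phase shift, the quadratic correction terms built into $\bar u$, and the additive constant $n\pi/4$ must conspire so that the self-shrinker equation reproduces itself exactly, with no residual constant $\Theta$. Once this cancellation is confirmed, smoothness of $\bar u$ follows from smoothness of $u$ and $\Phi^{-1}$, completing the proof.
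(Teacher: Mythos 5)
Your proof is correct and rests on the same Lewy-rotation idea as the paper's, but the two arguments organize the final verification differently in a way worth noting. The paper differentiates both sides: it computes $\partial F/\partial\bar x$ via the chain rule, uses $(\ref{e4.18})$--$(\ref{e4.19})$ to pass from $F$ to $\bar F$, and observes that the result equals $\partial\bar G/\partial\bar x$; this gives $\bar F=\bar G+\text{const}$, with the constant tacitly absorbed into the additive normalization of $\bar u$ (which $(\ref{e4.12})$ determines only up to a constant). You instead integrate the closed one-form $D\bar u\cdot d\bar x$ to produce the explicit potential $\bar u(\bar x)=u-\tfrac12 x\cdot Du+\tfrac14(|Du|^2-|x|^2)+\tfrac{n\pi}{4}$, check that $\bar x\cdot D\bar u=\tfrac12(|Du|^2-|x|^2)$, and verify the self-shrinker equation pointwise via $\arctan\frac{\lambda-1}{\lambda+1}=\arctan\lambda-\frac\pi4$ (legitimate since $D^2u\ge0$ gives $\lambda_i\ge0>-1$). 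This buys you a cleaner treatment of the additive constant, which the paper leaves implicit, at the cost of carrying a few more explicit quadratic terms. Your observation that $x\mapsto x+Du(x)$ is the gradient of the uniformly convex function $\tfrac12|x|^2+u$, hence a global diffeomorphism, is the right justification for $\Phi$ being invertible; the paper takes this for granted. Both approaches reach the same identity $D^2\bar u=(D^2u-I)(I+D^2u)^{-1}$, and both correctly deduce $-I\le D^2\bar u\le I$.
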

{\bf Proof:} Suppose that
$$F=\arctan(\lambda_1)+\cdots+\arctan(\lambda_n),\,\,\,G=-u+\frac{1}{2}<x,D
u>,$$
$$x=(x_{1},\cdots,x_{n}),\,\,\,
\frac{\partial F}{\partial x}=(\frac{\partial F}{\partial x_{1}},\cdots,\frac{\partial F}{\partial x_{n}}).$$
Then
\begin{equation}\label{e4.13}
\frac{\partial F}{\partial \bar{x}}=\frac{\partial F}{\partial
x}\frac{\partial x}{\partial \bar{x}}.
\end{equation}
By (\ref{e4.12}), we get
\begin{equation}\label{e4.14}
\left\{ \begin{aligned}x=\frac{\bar{x}-D \bar{u}(\bar{x})}{\sqrt{2}} \\
D u(x)=\frac{\bar{x}+D \bar{u}(\bar{x})}{\sqrt{2}}
\end{aligned} \right..
\end{equation}
So
\begin{equation}\label{e4.15}
\frac{\partial x}{\partial \bar{x}}=\frac{I-D^{2}
\bar{u}(\bar{x})}{\sqrt{2}}.
\end{equation}
By  (\ref{e1.9}) and (\ref{e4.14}), we have
\begin{equation} \begin{aligned}\label{e4.16}
\frac{\partial F}{\partial x}=\frac{\partial G}{\partial x}&=-\frac{1}{2}D u+\frac{1}{2}xD^{2} u\\
&=-\frac{\bar{x}+D
\bar{u}(\bar{x})}{2\sqrt{2}}+\frac{1}{4}(\bar{x}-D
\bar{u}(\bar{x}))(I+D^{2} \bar{u}(\bar{x})) \frac{\partial
\bar{x}}{\partial x}.
\end{aligned}
\end{equation}
Using (\ref{e4.13}), (\ref{e4.15}), (\ref{e4.16}) and
$$\frac{\partial \bar{x}}{\partial x}\frac{\partial x}{\partial \bar{x}}=I,$$
we obtain
\begin{equation} \begin{aligned}\label{e4.17}
\frac{\partial F}{\partial \bar{x}}&=-\frac{1}{4}(\bar{x}+D
\bar{u}(\bar{x}))(I-D^{2} \bar{u}(\bar{x}))
+\frac{1}{4}(\bar{x}-D \bar{u}(\bar{x}))(I+D^{2} \bar{u}(\bar{x}))\\
&=-\frac{1}{2}D \bar{u}+\frac{1}{2}\bar{x}D^{2}\bar{u}.
\end{aligned}
\end{equation}
From (\ref{e4.12}), we see that
\begin{equation}\label{e4.18}
D^{2}\bar{u}=(I+D^{2}u)^{-1}(-I+D^{2}u).
\end{equation}
Hence,
\begin{equation}\label{e4.19}
\arctan(\lambda_1)+\cdots+\arctan(\lambda_n)=\frac{n\pi}{4}+(\arctan(\bar{\lambda}_1)+\cdots+\arctan(\bar{\lambda}_n)).
\end{equation}
Set
$$\bar{F}=\arctan(\bar{\lambda}_1)+\cdots+\arctan(\bar{\lambda}_n),\,\,\,\bar{G}=-\bar{u}+\frac{1}{2}<\bar{x},D
\bar{u}>.$$ Combining  (\ref{e4.17}) with (\ref{e4.19}), we obtain
$$\frac{\partial \bar{F}}{\partial \bar{x}}=-\frac{1}{2}D \bar{u}+\frac{1}{2}\bar{x}D^{2}\bar{u}=
\frac{\partial \bar{G}}{\partial \bar{x}}.$$
 By (\ref{e4.18}),
$$-I\leq D^{2}\bar{u}\leq I.$$
This completes the proof of Proposition \ref{p4.1}.
\qed

{\bf Proof of Theorem \ref{c1.7}:}

Case 1. Assume that  $u$  be a smooth convex solution to
(\ref{e1.9}). By Lewy rotation (\ref{e4.12}) in Proposition
\ref{p4.1},  $\bar{u}$ is a smooth solution to (\ref{e1.9}) and
$D^2\bar{u}$ satisfies  (\ref{e1.10}). Using Theorem  \ref{t1.6},
$D^2\bar{u}$ must be a constant matrix.   From (\ref{e4.18}), we
deduce that $u$ is a quadric polynomial.

Case 2. Assume  that $u$  is a smooth concave solution to (\ref{e1.9}).  Set $u^{*}=-u$,
then $u^{*}$ must be a quadric polynomial by case 1.

So we have the desired results.
\qed

\vspace{5mm}
{\bf Acknowledgment:} After this paper was submitted, we learned that a
similar result  was obtained by A. Chau, J.Y. Chen and Y. Yuan \cite{ACY}, in which the Bernstein type theorem of (\ref{e1.9}) holds
without any assumption.
 We wish to express our sincere gratitude to Professor Y.L. Xin and Professor J.G. Bao for
 their  valuable suggestions and comments. We also would like to thank  referees for useful comments, which improved the paper.


\begin{thebibliography}{DU}

  \bibitem{ACH1} A. Chau, J.Y. Chen, W.Y. He,
 {\it Lagrangian mean curvature flow for entire lipschitz graphs. }
 arXiv: 0902.3300.

 \bibitem{ACH2} A. Chau, J.Y. Chen, W.Y. He,
{\it Entire self-similar solutions to Lagrangian mean curvature flow}.  arXiv: 0905.3869.

\bibitem{ACY} A. Chau, J.Y. Chen, Y. Yuan,
{\it Rigidity of Entire self-shrinking solutions to curvature flows.}
www.math.washington.edu/~yuan/papers/index.html, to appear in J. Reine Angew. Math.


\bibitem{LX}  A.M. Li, R.W. Xu, {\it A rigidity theorem for an affine K\"{a}hler-Ricci flat graph}.
Result. Math. (2009), 1-24.

\bibitem{P} A.V. Pogorelov, {\it On the improper convex affine hyperspheres}. Geom. Dedi. {\bf 1} (1972), 33-46.

\bibitem{GT} D. Gilbarg, N.S. Trudinger, {\it Elliptic partial differential equations
of second order, }  Second edition, Grundlehren der Mathematischen
Wissenschaften, 224, Berlin: Springer-Verlag, 1998.


\bibitem{C} E. Calabi, {\it Improper affine hypersurfaces of convex type and a generalization of a theorem by K. J\"{o}rgens}.
Michigan Math.J. {\bf 5} (1958), 105-126.

\bibitem{BC}  J.G. Bao, J.Y. Chen, B. Guan, M. Jin {\it Liouville property and regularity of a Hessian quotient equation}.
Amer. J. Math. 125 (2003), 301-316.


\bibitem{Nit} J.C.C. Nitsche, {\it Elementary proof of Bernstein's theorem on minimal surfaces}.
 Ann. Math.
{\bf 66} (1957), 543-544.

\bibitem{JX} J. Jost, Y.L. Xin, {\it Some aspects of the global geometry of entire space-like submanifolds}. Rusult. Math. {\bf 40} (2001), 233-245.

\bibitem{J} K. J\"{o}rgens, {\it ¨¹ber die L\"{o}sungen der Differentialgleichung rt-$s^{2}$=1}.
Math. Ann. {\bf 127} (1954), 130-134.

\bibitem{KK} K. Groh, M. Schwarz, K. Smoczyk, K. Zehmisch, {\it Mean curvature flow of monotone Lagrangian submanifolds}.
Math. Z. {\bf 257} (2007), 295-327.

\bibitem{KM}K. Smoczyk and M.T. Wang,
{\it Mean curvature flows of Lagrangian submanifolds with convex
potentials}, J. Diff Geom. , {\bf 62}(2002), 243-257.


 \bibitem{KT}K.S. Tso, {\it On a real Monge-Ampere functional}.  Invent. Math. {\bf 101}(1990), 425-448.


\bibitem{LL} L. Caffarelli, Y.Y. Li, {\it An extension to a theorem of J\"{o}rgens, Calabi, and Pogorelov}.
Comm. Pure Appl. Math. {\bf 56} (2003), 549-583.

\bibitem{LLJ1} L. Caffarelli, L. Nirenberg,  J. Spruck,
{\it The Dirichlet problem for nonlinear second-order elliptic equations I. Monge-ampere equation,} Comm. Pure. Appl. Math.
{\bf 37}(1984), 369-402.


\bibitem{HL}  R. Harvey, H.B. Lawson , {\it Calibrated geometry}.
Acta. Math. 148(1982),47-157

\bibitem{HB} R.L. Huang, {\it Lagrangian mean Curvature flow In pseudo-Euclidean Space}, Chin. Ann. Math. - Series B, 32(2011), 187-200.

\bibitem{HB1} R.L. Huang, J.G. Bao, {\it The blow up analysis of  the general curve shortening flow,}
 arXiv:0908.2036.

\bibitem{CM}
T.H. Colding, W.P. Minicozzi, {\it Generic mean curvature flow I;
generic singularities}. arXiv: 0908.3788.



\bibitem{Y-M}Y. Giga, S. Goto, H. Ishii, M.H. Sato,
{\it Comparison principle and convexity preserving properties for singular degenerate parabolic equations on unbounded domains,}
Indiana. Univ. Math. J.
{\bf 40}(1991), 443-470.



\bibitem{Y}Y. Yuan,
{\it A Bernstein problem for special Lagrange equations},
Invent. Math. {\bf 150}(2002), 117-125.









































\end{thebibliography}
\end{document}